\newtheorem{thm}{Theorem}
\newtheorem{lem}[thm]{Lemma}
\newtheorem{prop}[thm]{Proposition}
\newtheorem{remark}[thm]{Remark}
\numberwithin{equation}{section}
\numberwithin{thm}{section}
\newcommand{\Rm}{\mathrm{Rm}}
\newcommand{\Rc}{\mathrm{Rc}}
\newcommand{\End}{\mathrm{End}}
\title[Preservation of product structures]{Preservation of product structures under the Ricci flow with instantaneous curvature bounds}
\author{Mary Cook}
\begin{document}
\begin{abstract}
In this note, we prove that there exists a constant $\epsilon  >0$, depending only on the dimension, such that if a complete solution to the Ricci flow splits as a product at time $t=0$ and has curvature bounded by $\frac{\epsilon}{t}$, then the solution splits for all time.
\end{abstract}
\maketitle

\section{Introduction}

In this note, we consider the problem of whether a solution to the Ricci flow
\begin{equation}\label{rf}
\frac{\partial}{\partial t} g = -2 \Rc
\end{equation}
which splits as a product at $t=0$ continues to do so for all time.

This problem is closely related, but not strictly equivalent, to the question of uniqueness of solutions to (\ref{rf}). For example, when $(\hat{M} \times \check{M},\hat{g}_0 \oplus \check{g}_0)$, Shi's existence theorem \cite{shideformmetric} implies that there exist complete, bounded curvature solutions $(\hat{M}, \hat{g}(t))$ and $(\check{M}, \check{g}(t))$ with initial conditions $\hat{g}_0$ and $\check{g}_0$, respectively, which exist on some common time interval $[0,T]$. Then, $\hat{g} (t) \oplus \check{g}(t)$ solves (\ref{rf}) on $\hat{M} \times \check{M}$ for $t \in [0,T]$ and is also complete and of bounded curvature. But, according to the uniqueness results of Hamilton \cite{ham3d} and Chen-Zhu \cite{chenzhu}, such a solution is unique among those which are complete and have bounded curvature. Thus, any solution in that class starting at $\hat{g}_0 \oplus \check{g}_0$ continues to split as a product.

Outside of this class, less is known. While there are elementary examples which show that without completeness, a solution may instantaneously cease to be a product, the extent to which the uniform curvature bound can be relaxed is less well-understood. (One exception is in dimension two, where the work of Giesen and Topping \cite{gtrfexistence,gtrfnegativelycurved} has established an essentially complete theory of existence and uniqueness for (\ref{rf}). In particular, in \cite{toppinguniqueness}, Topping shows that any two complete solutions with the same initial data must agree.)

One class of particular interest is that of solutions satisfying a curvature bound of the form $c/t$ for some constant $c$, which arise naturally as limits of exhaustions (see, e.g., \cite{cabezasrivaswilking}, \cite{hochard}, \cite{simontopping}). The purpose of this note is to prove the following.

\begin{thm}\label{main}
Let $(\hat{M}, \hat{g}_0)$ and $(\check{M}, \check{g}_0)$ be two connected Riemannian manifolds and let $M = \hat{M} \times \check{M}$ and $g_0 = \hat{g}_0 \oplus \check{g}_0$. Then there exists a constant $\epsilon = \epsilon(n) > 0$, where $n = \mathrm{dim} (M)$, such that if $g(t)$ is a complete solution to (\ref{rf}) on $M \times [0,T]$ with $g(0) = g_0$ satisfying
\begin{equation}\label{bound}
|\Rm| \le \frac{\epsilon}{t},
\end{equation}
then $g(t)$ splits as a product for all $t \in [0,T]$, i.e., there exist $\hat{g}(t), \check{g}(t)$ such that $g(t) = \hat{g}(t) \oplus \check{g}(t)$, where $\hat{g}(t)$ and $\check{g}(t)$ are solutions to (\ref{rf}) on $\hat{M}$ and $\check{M}$, respectively, for $t \in [0,T]$.
\end{thm}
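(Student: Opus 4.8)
The plan is to reduce Theorem~\ref{main} to the vanishing of a tensor measuring the failure of the product structure to persist, and then to prove that vanishing by a weighted energy estimate in which the dimensional smallness of $\epsilon$ is used to defeat the borderline factor $1/t$ in \eqref{bound}. Since $M=\hat M\times\check M$ is genuinely a product of manifolds, $TM$ carries a fixed, time-\emph{independent} splitting $TM=\hat T\oplus\check T$ into the tangent distributions of the two factors; let $P\in\Gamma(\End(TM))$ be the corresponding time-independent projection onto $\hat T$ along $\check T$. Let $\Pi(t)$ denote the $g(t)$-orthogonal projection onto $\hat T$; since $g_0$ is a product, $\Pi(0)=P$, and moreover $P$ is $\nabla^{g_0}$-parallel, so, setting
\[
u(t):=\nabla^{g(t)}\Pi(t),\qquad W(t):=[\,\Rm_{g(t)},\Pi(t)\,],
\]
we have $u(0)=0$ and $W(0)=0$. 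If $u\equiv0$ on $[0,T]$ then $\Pi(t)$ is $\nabla^{g(t)}$-parallel, hence commutes with curvature, so $W\equiv0$, so $\partial_t\Pi\equiv0$ and $\Pi(t)\equiv P$; thus $\hat T$ and $\check T$ are $g(t)$-parallel and $g(t)$-orthogonal, and by the local de Rham splitting theorem $g(t)=\hat g(t)\oplus\check g(t)$, with the factors complete (completeness descends to factors) and each solving \eqref{rf} (restrict $\partial_t g=-2\Rc$ to the factor directions). So the theorem reduces to proving $u\equiv 0$.

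Next I would derive evolution equations. Because $\Pi(t)$ is an algebraic function of $g(t)$ whose infinitesimal change only sees the off-(block-)diagonal part of $\partial_t g=-2\Rc$, one finds that $\partial_t\Pi$ is a contraction of $W$ with universally bounded coefficients; combining this with $\partial_t\Gamma^k_{ij}=-g^{kl}(\nabla_i\Rc_{jl}+\nabla_j\Rc_{il}-\nabla_l\Rc_{ij})$ and the Ricci commutation identity (which rewrites the antisymmetrized $\nabla^2\Pi$ through curvature and $u$), the quantity $u$ satisfies a heat-type equation, schematically
\[
\partial_t u=\Delta u+\Rm\ast u+\nabla W+\Rm\ast W,
\]
where $\ast$ denotes metric contraction with bounded coefficients. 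Differentiating $W$ and using $\partial_t\Rm=\Delta\Rm+\Rm\ast\Rm$, $\nabla\Pi=u$, one gets the companion
\[
\partial_t W=\Delta W+\Rm\ast W+\nabla\Rm\ast u+\Rm\ast\nabla u,
\]
so $(u,W)$ is a closed second-order parabolic system. The essential structural point is that \emph{every} term on the right-hand sides carries a factor of $u$ or $W$ (no uncontrolled $\nabla\Rc$ survives), as it must, since both quantities vanish identically on a product.

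The core of the argument is an energy estimate for this system. For a cutoff $\psi$ and exponents $0<\beta<\alpha<2$ with $\alpha-\beta$ of order one, set
\[
\mathcal E(t):=\int_M\psi^2\big(|u|^2\,t^{-\alpha}+|W|^2\,t^{-\beta}\big)\,dV_{g(t)}.
\]
Differentiating, integrating by parts, and absorbing the undifferentiated $\nabla u,\nabla W$ into the favorable terms $-\int\psi^2(|\nabla u|^2t^{-\alpha}+|\nabla W|^2t^{-\beta})$ produced by the Laplacians — here the $\Rm\ast\nabla u$ and $\nabla\Rm\ast u$ terms are controlled using $|\Rm|\le\epsilon/t$ together with the Shi-type bound $|\nabla\Rm|\le C t^{-3/2}$ it implies (with constant small in $\epsilon$), and the use of two distinct powers $t^{-\alpha},t^{-\beta}$ is exactly what lets these borderline terms be absorbed — one obtains a differential inequality
\[
\dt\mathcal E(t)\le\frac{C(n)\epsilon-\beta}{t}\,\mathcal E(t)+C(n)\,\mathcal E(t)+(\text{cutoff error}).
\]
Choose $\epsilon$ small enough (depending only on $n$) that $C(n)\epsilon<\beta$. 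On compact sets $g(t)\to g_0$ smoothly as $t\to0$, so $|u|^2+|W|^2=O(t^2)$ there and $\mathcal E$ stays bounded near $t=0$; integrating from $t_0$ to $t$ gives $\mathcal E(t)\le C\,\mathcal E(t_0)\,(t/t_0)^{\,C(n)\epsilon-\beta}\,e^{C(n)(t-t_0)}\to0$ as $t_0\to0$, since $\beta-C(n)\epsilon>0$. Hence $\mathcal E\equiv0$, so $u\equiv0$, which is the theorem. (One may also confine the work to a short initial interval: once $g$ is a product at some $t_\ast>0$ one has $|\Rm(g(t))|\le\epsilon/t_\ast$ for $t\ge t_\ast$, i.e. a complete \emph{bounded-curvature} solution with product initial data, and Shi's existence theorem \cite{shideformmetric} together with the uniqueness theorems of Hamilton \cite{ham3d} and Chen--Zhu \cite{chenzhu} keep it a product; a continuity argument then reduces matters to proving the splitting on $[0,\delta]$.)

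The principal obstacle is precisely the borderline nature of \eqref{bound}: a naive Gr\"onwall estimate fails since $\int_0 dt/t=\infty$, so everything hinges on (i) verifying that, after the Bianchi and commutation identities, the only surviving zeroth-order coefficient in $\dt\mathcal E$ is genuinely of size $\epsilon/t$ (every undifferentiated $\nabla\Rc$ having been paired with $u$ or turned into a derivative of $W$), and (ii) tuning the distinct weight exponents so that the $\Rm\ast\nabla u$ and $\nabla\Rm\ast u$ couplings are absorbable, and then taking $\epsilon$ small enough, depending only on $n$, that $C(n)\epsilon$ falls below the admissible exponent $\beta$. The remaining difficulties, routine in spirit but requiring care, are: checking that the system $(u,W)$ closes with no uncontrolled curvature-derivative remnants; handling the noncompactness of $M$ (the cutoff errors are again controlled using $|\Rm|\le\epsilon/t$ and the comparability of $g(t)$ with $g_0$ on compact sets, or one invokes a maximum principle valid on complete manifolds, legitimate since $g(t)$ has bounded curvature for $t$ away from $0$); and pinning down the $t\to0$ asymptotics of $u,W$ and $\mathcal E$ from the hypothesis $g(0)=g_0$ and interior estimates.
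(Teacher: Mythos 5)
Your reduction of the theorem to the vanishing of a tensor measuring departure from a product, and your instinct to exploit the $\epsilon$-smallness against the $1/t$ factor, are both in the right spirit and match the paper's strategy at a high level. But the implementation has two genuine gaps.

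First, the claimed structure of the evolution system is not right. You take $\Pi(t)$ to be the $g(t)$-orthogonal projection onto the fixed distribution $\hat T$; since $\partial_t\Pi$ is then a purely algebraic expression in $g,\ \partial_t g=-2\Rc$ and $\Pi$, there is no Laplacian of $\Pi$ anywhere in $\partial_t\Pi$, hence no reason for $u=\nabla\Pi$ to satisfy a heat equation. It satisfies an ODE in $t$ sourced by $\nabla\Rc$ terms, not $\partial_t u=\Delta u+\cdots$. This matters: without a Laplacian in the $u$-equation there is no favorable $-\int\psi^2|\nabla u|^2 t^{-\alpha}$ term, which your absorption argument relies on. By contrast, the paper deliberately evolves the projection by the Uhlenbeck-type ODE $\partial_t\hat P=[\Rc,\hat P]$, so that $D_t\mathcal P\equiv 0$; then $\nabla\mathcal P,\nabla^2\mathcal P$ satisfy genuine \emph{ODEs} (Prop.~\ref{systembound}), with sources given by the ``mixed'' curvature derivatives $\mathcal S=(\nabla\Rm)\circ\mathcal P$ and $\mathcal T=(\nabla^2\Rm)\circ\mathcal P$ rather than by $\nabla\mathcal R$ and $\nabla^2\mathcal R$. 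This distinction (composing $\nabla\Rm$ with $\mathcal P$ rather than differentiating $\Rm\circ\mathcal P$) is precisely what removes all derivative couplings $\nabla\mathbf X,\nabla\mathbf Y$ from the right-hand sides, and it is what lets the system be written as $|(D_t-\Delta)\mathbf X|\le C(t^{-1}|\mathbf X|+t^{-2}|\mathbf Y|)$, $|D_t\mathbf Y|\le C(|\mathbf X|+t^{-1}|\mathbf Y|)$ with no $\nabla\mathbf X,\nabla\mathbf Y$ on the right. Your $(u,W)$ system, as you have written it, has both $\nabla W$ in the $u$-equation and $\Rm\ast\nabla u$ in the $W$-equation.

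Second, even granting your schematic system, the weighted energy $\mathcal E=\int\psi^2(|u|^2 t^{-\alpha}+|W|^2 t^{-\beta})$ does not close. The $\Rm\ast W$ term in $\partial_t u$ gives $\int\psi^2\langle u,\Rm\ast W\rangle t^{-\alpha}\lesssim \epsilon t^{-1-\alpha}\int\psi^2|u||W|$, and writing $|u||W|t^{-\alpha}=\sqrt{(|u|^2t^{-\alpha})(|W|^2t^{-\beta})}\,t^{(\beta-\alpha)/2}$ shows this is $\epsilon t^{-1-(\alpha-\beta)/2}$ times something bounded by $\tfrac12(|u|^2t^{-\alpha}+|W|^2t^{-\beta})$; for any $\alpha>\beta$ this blows up faster than the favorable $t^{-1}$ terms $-\alpha t^{-1}|u|^2t^{-\alpha}-\beta t^{-1}|W|^2t^{-\beta}$ can absorb, and smallness of $\epsilon$ does not help because the failure is in the power of $t$, not in the constant. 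The same scaling obstruction shows up in the paper's formulation as the $t^{-2}|\mathbf Y|$ coupling to $(D_t-\Delta)\mathbf X$; no choice of polynomial weights lets an energy estimate absorb it. The paper's way around this is not an integral estimate but the ``running-maximum'' trick of Huang--Tam: it first integrates the \emph{ODE} for $\mathbf Y$ to get $G(x,t)\le Ct\max_{0\le s\le t}F(x,s)$, which converts $t^{-2}G$ into $t^{-1}\max F$, and then applies the pointwise maximum principle of Proposition~\ref{maxprinscalar} (where $\epsilon$-smallness enters through the cutoff-barrier construction, not through absorption). That mechanism has no analogue in your energy framework.

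One further item you would need for either approach: your statement ``$|u|^2+|W|^2=O(t^2)$ on compact sets'' is only $C^1$ information at $t=0$, whereas the maximum principle the paper uses requires the vanishing of \emph{all} time derivatives of the tracking quantities at $t=0$; the paper devotes Proposition~\ref{timederivativesvanish} to establishing this inductively. If you want to retain an energy-type argument you would still need to pin down the full Taylor expansion of $u,W$ at $t=0$, since otherwise the $t_0\to0$ limit of your integrated inequality is not under control.
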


Lee \cite{lee} has already established the uniqueness of complete solutions satisfying the bound (\ref{bound}). However, his result does not directly imply Theorem \ref{main}: without any restrictions on the curvatures of $\hat{g}_0$ and $\check{g}_0$, we lack the short-time existence theory to guarantee that there are \emph{any} solutions on $\hat{M}$ and $\check{M}$, respectively, with the given initial data, let alone solutions satisfying a bound of the form (\ref{bound}) for sufficiently small $\epsilon$. Thus we are unable to construct a competing product solution on $\hat{M} \times \check{M}$ to which we might apply Lee's theorem.

Instead, we frame the problem as one of uniqueness for a related system, using a perspective similar to that of \cite{liuszek} and \cite{kholonomy}. The key ingredient is a maximum principle closely based on one due to Huang-Tam \cite{huangtam} and modified by Liu-Sz\'ekelyhidi \cite{liuszek}. These references establish, among other things, related results concerning the preservation of K\"ahler structures.

\section{Tracking the product structure}

Our first step toward proving Theorem 1 is to construct a system associated to a solution to Ricci flow which measures the degree to which a solution which initially splits as a product fails to remain a product. Consider a Riemannian product $(M,g_0) = (\hat{M} \times \check{M}, \hat{g}_0 \oplus \check{g}_0)$, and let $g(t)$ be a smooth solution to the Ricci flow on $M \times [0,T]$ with $g(0) = g_0$. For the time being, we make no assumptions on the completeness of $g(t)$ or bounds on its curvature. 

\subsection{Extending the projections}

Let $\hat{\pi} : M \to \hat{M}$ and $\check{\pi} : M \to \check{M}$ be the projections on each factor, and let $\hat{H} = \ker( d\check{\pi} )$ and $\check{H} = \ker( d\hat{\pi})$.  We define $\hat{P}_0, \check{P}_0 \in \mathrm{End} (TM)$ to be the orthogonal projections onto $\hat{H}$ and $\check{H}$ determined by $g_0$.

Following \cite{kholonomy}, we extend each of them to a time-dependent family of projections for $t\in [0,T]$ by solving the fiber-wise ODEs
\begin{equation}\label{pdefs}
 \begin{cases}
  \partial_t \hat{P}(t) = \Rc \circ \hat{P} - \hat{P} \circ \Rc\\
  \hat{P}(0) = \hat{P}_0\\
 \end{cases},
 \qquad
 \begin{cases}
  \partial_t \check{P}(t) = \Rc \circ \check{P} - \check{P} \circ \Rc\\
  \check{P}(0) = \check{P}_0\\
 \end{cases}.
\end{equation}
From $\hat{P}$ and $\check{P}$, we construct time-dependent endomorphisms $\mathcal{P}, \bar{\mathcal{P}} \in  \End (\Lambda^2 T^*M)$ by
\begin{equation*}
\mathcal{P}\omega (X,Y) = \omega(\hat{P} X, \check{P} Y) + \omega( \check{P} X, \hat{P} Y),
\end{equation*}
\begin{equation*}
\bar{\mathcal{P}} \omega (X,Y) = \omega( \hat{P} X, \hat{P} Y) + \omega ( \check{P} X, \check{P} Y).
\end{equation*}

Let $\Rm: \Lambda^2 T^*M \to \Lambda^2 T^*M$ be the curvature operator, and define the following:
\begin{equation*}\label{rdef}
\mathcal{R} = \Rm \circ \mathcal{P}, \qquad \bar{\mathcal{R}} = \Rm \circ \bar{\mathcal{P}},
\end{equation*}
\begin{equation*}
\mathcal{S} = (\nabla \Rm) \circ (\mathrm{Id} \times \mathcal{P}), \qquad \mathcal{T} = (\nabla \nabla \Rm) \circ (\mathrm{Id} \times \mathrm{Id} \times \mathcal{P}).
\end{equation*}

In order to study the evolution of $\mathcal{R}$, it will be convenient to introduce an operator $\Lambda^a_b$ which acts algebraically on tensors via
\begin{equation*}
\Lambda^a_b A^{j_1 \dots j_k}_{i_1 \dots i_l} = \delta^a_{i_1} A^{j_1 \dots j_k}_{b i_2 \dots i_l} + \dots + \delta^a_{i_l} A^{j_1 \dots j_k}_{ i_1 \dots b} - \delta^{j_1}_{b} A^{a \dots j_k}_{i_1 \dots i_l} - \dots - \delta^{j_k}_{b} A^{j_1 \dots a}_{i_1 \dots i_l}.
\end{equation*}
We will also consider the operator
\begin{equation*}
D_t := \partial_t + R_{ab} g^{bc} \Lambda^a_c.
\end{equation*}
This operator has the property that $D_t g = 0$, and for any time-dependent tensor fields $A$ and $B$,
\begin{equation*}
D_t \langle A, B \rangle = \langle D_t A,B \rangle + \langle A, D_t B \rangle,
\end{equation*}
where $\langle \cdot, \cdot \rangle$ is the metric induced by $g(t)$.
Note that by construction the projections satisfy
\begin{equation*}
D_t \hat{P} \equiv 0, \quad D_t \check{P} \equiv 0, \quad D_t \mathcal{P} \equiv 0, \quad D_t \bar{\mathcal{P}} \equiv 0 .
\end{equation*}

\subsection{Evolution equations}

In order to determine how the components of $\mathbf{X}$ and $\mathbf{Y}$ evolve, we will make use of the following commutation formulas (see \cite{kholonomy}, Lemma 4.3):
\begin{equation}\label{dtcomm}
[D_t, \nabla_a] = \nabla_p R_{pacb} \Lambda^b_c + R_{ac} \nabla_c,
\end{equation}
\begin{equation}\label{heatcomm}
[D_t - \Delta, \nabla_a] = 2R_{abdc} \Lambda^c_d \nabla_b + 2 R_{ab} \nabla_b.
\end{equation}

Additionally, we will need to examine the sharp operator on endomorphisms of two forms. For any $A, B \in \mathrm{End} (\Lambda^2 T^*M)$, 
\begin{equation*}
\langle A \# B (\varphi), \psi \rangle= \frac{1}{2} \sum_{\alpha, \beta} \langle [A(\omega_\alpha), B(\omega_\beta)], \varphi \rangle \cdot \langle [\omega_\alpha, \omega_\beta], \psi\rangle,
\end{equation*}
where $\varphi, \psi \in \Lambda^2 T^*M$ and $\{ \omega_\alpha \}$ is an orthonormal basis for $\Lambda^2 T^*M$. Recall that the curvature operator evolves according to
\begin{equation*}
(D_t - \Delta) \Rm = \mathcal{Q}(\Rm, \Rm),
\end{equation*}
under the Ricci flow, where $\mathcal{Q} (A,B) = \frac{1}{2}(AB + BA) + A \# B$.

\begin{prop}\label{schematicequationsp}
We have the following evolution equations for the projection $\mathcal{P}$:
\begin{equation*}
D_t \nabla \mathcal{P} = \Rm *\nabla \mathcal{P} + \mathcal{P} * \mathcal{S},
\end{equation*}
\begin{equation*}
D_t \nabla^2 \mathcal{P} = \Rm * \nabla^2 \mathcal{P} + \nabla \Rm * \nabla \mathcal{P} + \mathcal{P} * \mathcal{T} + \nabla \Rm * \mathcal{P} * \nabla \mathcal{P}.
\end{equation*}
In particular, there exists a constant $C = C(n)$ such that
\begin{equation}\label{pnorms}
\begin{split}
|D_t \nabla \mathcal{P}| &\le C ( |\Rm| |\nabla \mathcal{P}| +|\mathcal{S}|), \\
|D_t \nabla^2 \mathcal{P}| & \le C( |\Rm| |\nabla^2 \mathcal{P}| + |\nabla \Rm| | \nabla \mathcal{P}| + |\mathcal{T}|).\\
\end{split}
\end{equation}
\end{prop}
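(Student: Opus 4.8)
The plan is to differentiate the parallel identity $D_t\mathcal P\equiv 0$ using the commutator (\ref{dtcomm}), to recognize the curvature terms that fall out as the quantities $\mathcal S$ and $\mathcal T$, and then to read off (\ref{pnorms}) using that $|\mathcal P|$ is a dimensional constant. For the first equation, since $D_t\mathcal P\equiv 0$, commuting $D_t$ past $\nabla_a$ with (\ref{dtcomm}) gives
\[
D_t\nabla_a\mathcal P=[D_t,\nabla_a]\mathcal P=(\nabla_p R_{pacb})\Lambda^b_c\mathcal P+R_{ac}\nabla_c\mathcal P .
\]
The last term is visibly of the form $\Rm*\nabla\mathcal P$. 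For the first term one expands the algebraic action of $\Lambda^b_c$ on $\mathcal P$: each summand is a contraction of $\nabla\Rm$ (equivalently, by the contracted second Bianchi identity, of $\nabla\Rc$) against one copy of $\mathcal P$. To see that these assemble into the schematic form $\mathcal P*\mathcal S$, one uses that $\mathcal P$ is a projection. Indeed, because $D_t\hat P=D_t\check P=0$ and $\hat P,\check P$ agree with complementary orthogonal projections at $t=0$, the relations $\hat P^2=\hat P$, $\check P^2=\check P$, $\hat P\check P=\check P\hat P=0$, $\hat P+\check P=\Id$ all persist: the difference of the two sides of each relation satisfies the linear ODE $D_tX=0$ with $X(0)=0$, hence vanishes. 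In particular $\mathcal P^2=\mathcal P$. Inserting a second factor of $\mathcal P$ into the slot being differentiated and using the symmetries of $\Rm$, each summand is rewritten as a contraction of $\mathcal P$ against $\mathcal S=(\nabla\Rm)\circ(\Id\times\mathcal P)$, which gives the first displayed equation.

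For the second equation, apply $\nabla_b$ to the first one and commute $\nabla_b$ past $D_t$ once more with (\ref{dtcomm}). Distributing $\nabla_b$ over $\Rm*\nabla\mathcal P+\mathcal P*\mathcal S$ produces terms $\Rm*\nabla^2\mathcal P$, $\nabla\Rm*\nabla\mathcal P$, $\nabla\mathcal P*\mathcal S$, and $\mathcal P*\nabla\mathcal S$, while the extra commutator $[D_t,\nabla_b]\nabla_a\mathcal P$ contributes only further $\Rm*\nabla^2\mathcal P$ and $\nabla\Rm*\nabla\mathcal P$ terms. The one new object, $\nabla\mathcal S$, is handled by the Leibniz rule:
\[
\nabla\mathcal S=(\nabla^2\Rm)\circ(\Id\times\Id\times\mathcal P)+(\nabla\Rm)\circ(\Id\times\nabla\mathcal P)=\mathcal T+\nabla\Rm*\mathcal P*\nabla\mathcal P ,
\]
where in the last term a factor $\mathcal P=\mathcal P^2$ has again been inserted. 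Since $\nabla\mathcal P*\mathcal S$ is also of the form $\nabla\Rm*\mathcal P*\nabla\mathcal P$, collecting everything (the repeated copies of $\mathcal P$ being harmless) yields the second displayed equation.

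The estimates (\ref{pnorms}) then follow by taking norms, once we record that $|\mathcal P|\le C(n)$. This holds because $D_t\mathcal P=0$ and $D_t$ is compatible with $\langle\cdot,\cdot\rangle$, so, $D_t$ acting as $\partial_t$ on the scalar $|\mathcal P|^2$, we get $\partial_t|\mathcal P|^2=D_t|\mathcal P|^2=2\langle D_t\mathcal P,\mathcal P\rangle=0$; hence $|\mathcal P|\equiv|\mathcal P_0|$, which equals $(\dim\hat M\cdot\dim\check M)^{1/2}$ since $\mathcal P_0$ is an orthogonal projection of that rank. Consequently $|\mathcal P*\mathcal S|\le C(n)|\mathcal S|$, $|\mathcal P*\mathcal T|\le C(n)|\mathcal T|$, and $|\nabla\Rm*\mathcal P*\nabla\mathcal P|\le C(n)|\nabla\Rm|\,|\nabla\mathcal P|$, which are exactly the terms appearing in (\ref{pnorms}).

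The step that is not mere bookkeeping is the claim used in the first equation: that the $\Lambda$-contractions of $\nabla\Rm$ against $\mathcal P$ coming out of (\ref{dtcomm}) — which a priori involve contraction patterns different from the one defining $\mathcal S$, and include $\nabla\Rc$-type traces — can be rewritten, using only $\mathcal P^2=\mathcal P$ and the curvature symmetries, in a form factoring through $\mathcal S$. This is essential rather than cosmetic: the crude bound $|(\nabla_p R_{pacb})\Lambda^b_c\mathcal P|\le C|\nabla\Rm|\,|\mathcal P|$ would leave $|\nabla\Rm|$ in place of $|\mathcal S|$ on the right-hand side of the first line of (\ref{pnorms}), which is too lossy for the maximum-principle argument to follow, where $|\mathcal S|$ is controlled but $|\nabla\Rm|$ is not.
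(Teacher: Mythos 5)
Your overall strategy coincides with the paper's: differentiate $D_t\mathcal P\equiv 0$ using the commutator \eqref{dtcomm}, then iterate once more with the Leibniz rule, recognizing $\mathcal S$, $\mathcal T$ via $\nabla\mathcal S=\mathcal T+\nabla\Rm*\nabla\mathcal P$. Your side observations are also correct and worth spelling out: the persistence of $\hat P^2=\hat P$, $\check P^2=\check P$, $\hat P\check P=\check P\hat P=0$, $\hat P+\check P=\Id$ by the linear ODE argument (since $D_t$ is a derivation annihilating $g$), and the constancy of $|\mathcal P|$ (so that $*$-contractions with $\mathcal P$ only cost a dimensional constant); the paper uses both implicitly. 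Your closing paragraph correctly flags why the $\mathcal S$-dependence in \eqref{pnorms} (rather than $|\nabla\Rm|$) is the whole point.

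Where you diverge from the paper is in the treatment of the one genuinely nontrivial step, and there your sketch has a real gap. The paper does \emph{not} carry out the identification $[D_t,\nabla]\mathcal P=\Rm*\nabla\mathcal P+\mathcal P*\mathcal S$ in-house; it cites Propositions 4.5 and 4.6 of \cite{kholonomy}, where a rather involved tensor computation is performed. Your proposed shortcut --- ``insert a second factor of $\mathcal P$ using $\mathcal P^2=\mathcal P$ and use the symmetries of $\Rm$'' --- does not account for what the commutator actually produces. Expanding $\nabla_p R_{pacb}\Lambda^b_c\mathcal P$, the resulting terms contract the divergence $\nabla_p R_{pacb}$ (which, by the contracted second Bianchi identity, is of $\nabla\Rc$-type) against the projections with \emph{one index each} on $\hat P$ and $\check P$; by contrast, $\mathcal S=(\nabla\Rm)\circ(\Id\times\mathcal P)$ is a \emph{double} contraction of $\nabla\Rm$ with $\mathcal P$ on the two-form slot. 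Rewriting $\mathcal P^{kl}_{cj}$ as $\mathcal P^{kl}_{mn}\mathcal P^{mn}_{cj}$ does not change the single-contraction structure of $\nabla_p R_{pacb}$ with the remaining indices, so the asserted reduction does not follow. What actually bridges the gap in \cite{kholonomy} is a combination of the second Bianchi identity, the orthogonal splitting $\Id=\mathcal P+\bar{\mathcal P}$ with $\mathcal P\circ\bar{\mathcal P}=0$ (so $[L_a,\mathcal P]=\bar{\mathcal P}L_a\mathcal P-\mathcal P L_a\bar{\mathcal P}$ for the relevant endomorphism $L_a$), and the Leibniz rule to move the covariant derivative off the curvature and onto the contracted expression so that $\nabla\mathcal R=\mathcal S+\Rm*\nabla\mathcal P$ can be invoked. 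You do use exactly this kind of Leibniz-then-$\nabla\mathcal S$ step for the second equation, but not where it is needed for the first. As written, the proposal asserts the key identity rather than proving it, and the mechanism it names ($\mathcal P^2=\mathcal P$ plus $\Rm$-symmetries) is not sufficient on its own.
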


\begin{proof}

Using equation (\ref{dtcomm}) and the fact that $D_t \mathcal{P}=0$, we can see that $D_t \nabla \mathcal{P} = [D_t, \nabla] \mathcal{P}$.
With some additional computation, we can then see (as in Propositions 4.5 and 4.6 from \cite{kholonomy}) that

\begin{equation*}
D_t \nabla \mathcal{P} = \Rm *\nabla \mathcal{P} + \mathcal{P} * \mathcal{S}.
\end{equation*}
Similarly, using this equation together with (\ref{dtcomm}) and the fact that $\nabla \mathcal{S} = \mathcal{T} + \nabla \Rm * \nabla \mathcal{P}$, we have

\begin{equation*}
\begin{split}
D_t \nabla^2 \mathcal{P}& = [D_t, \nabla] \nabla \mathcal{P} + \nabla( D_t \nabla \mathcal{P})\\
& = \Rm * \nabla^2 \mathcal{P} + \nabla \Rm * \nabla \mathcal{P} + \mathcal{P} * \mathcal{T} + \nabla \Rm * \mathcal{P} * \nabla \mathcal{P},\\
\end{split}
\end{equation*}
as claimed.
\end{proof}

In order compute similar evolution equations for $\mathcal{R}, \mathcal{S},$ and $\mathcal{T}$, we will need the following lemma.

\begin{lem}\label{q}
Let $A, B \in \mathrm{End} (\Lambda^2 T^*M)$ be self-adjoint operators. There exists $C=C(n)>0$ such that
\begin{equation*}
| \mathcal{Q}(A,B) \circ \mathcal{P}| \le C \left( | A \circ \mathcal{P} | |B| + | A | | B \circ \mathcal{P}| \right).
\end{equation*}
\end{lem}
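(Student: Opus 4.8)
The plan is to split $\mathcal{Q}(A,B) = \frac12(AB + BA) + A\#B$ and treat the two pieces separately. The composition part is immediate: since $\mathcal{P}$ is composed on the right, $\frac12(AB + BA)\circ\mathcal{P} = \frac12\bigl(A\circ(B\circ\mathcal{P}) + B\circ(A\circ\mathcal{P})\bigr)$, and submultiplicativity of $|\cdot|$ under composition (bounding the operator norm by $|\cdot|$) gives $\bigl|\frac12(AB+BA)\circ\mathcal{P}\bigr| \le \frac12\bigl(|A|\,|B\circ\mathcal{P}| + |B|\,|A\circ\mathcal{P}|\bigr)$, already of the claimed form. So the entire content lies in estimating $(A\#B)\circ\mathcal{P}$.

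For that term I would identify $\Lambda^2 T^*M$ with $\mathfrak{so}(TM)$ via $g(t)$, so the bracket in the definition of $\#$ becomes the commutator. Since $D_t\hat{P} = D_t\check{P} = 0$ and $D_t g = 0$, the relations $\hat{P} + \check{P} = \Id$, $\hat{P}^2 = \hat{P}$, and $g(t)$-self-adjointness of $\hat{P}$ are all preserved in time (cf.\ \cite{kholonomy}); hence $\hat{P}(t), \check{P}(t)$ are complementary $g(t)$-orthogonal projections, $\mathcal{P}$ is the orthogonal projection onto the ``mixed'' two-forms (those whose associated skew endomorphisms are off-diagonal for the block decomposition $TM = \operatorname{im}\hat{P}(t) \oplus \operatorname{im}\check{P}(t)$), and $\bar{\mathcal{P}} = \Id - \mathcal{P}$ projects onto the ``pure'' (block-diagonal) part. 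Now a commutator of two block-diagonal or of two off-diagonal skew matrices is block-diagonal, while a commutator of a block-diagonal with an off-diagonal one is off-diagonal --- equivalently, this is the $\mathbb{Z}/2$-grading of the symmetric pair $\mathfrak{so}(n_1)\oplus\mathfrak{so}(n_2)\subset\mathfrak{so}(n)$ --- so $\mathcal{P}[\xi,\eta] = [\mathcal{P}\xi, \bar{\mathcal{P}}\eta] + [\bar{\mathcal{P}}\xi, \mathcal{P}\eta]$ for all $\xi,\eta \in \Lambda^2 T^*M$. Substituting this into the defining formula for $\#$ and moving the self-adjoint $\mathcal{P}$ off of $\varphi$ and onto the bracket $[A\omega_\alpha, B\omega_\beta]$, one obtains the operator identity
\[
(A\#B)\circ\mathcal{P} = (\mathcal{P}\circ A)\#(\bar{\mathcal{P}}\circ B) + (\bar{\mathcal{P}}\circ A)\#(\mathcal{P}\circ B).
\]
Then the standard bound $|C\#D| \le C(n)\,|C|\,|D|$ --- valid for arbitrary $C, D \in \End(\Lambda^2 T^*M)$ by bilinearity of $\#$ and finite-dimensionality --- combined with $|\bar{\mathcal{P}}\circ A| \le |A|$, $|\bar{\mathcal{P}}\circ B| \le |B|$ and, using self-adjointness of $A$ and $B$, the norm identities $|\mathcal{P}\circ A| = |A\circ\mathcal{P}|$ and $|\mathcal{P}\circ B| = |B\circ\mathcal{P}|$, yields $|(A\#B)\circ\mathcal{P}| \le C(n)\bigl(|A\circ\mathcal{P}|\,|B| + |A|\,|B\circ\mathcal{P}|\bigr)$, completing the proof.

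The only genuinely non-formal step --- the main obstacle --- is the grading identity $\mathcal{P}[\xi,\eta] = [\mathcal{P}\xi,\bar{\mathcal{P}}\eta] + [\bar{\mathcal{P}}\xi,\mathcal{P}\eta]$. Its structural content is that although $A\#B$ can produce two-forms of every type, restricting its input to the image of $\mathcal{P}$ forces the bracket relations to place exactly one of $\mathcal{P}, \bar{\mathcal{P}}$ on each argument, which is precisely what allows a factor of $\mathcal{P}$ to be extracted. Verifying it needs only (i) that $\mathcal{P}$ and $\bar{\mathcal{P}}$ really are the orthogonal projections onto the mixed and pure two-forms for every $t$, and (ii) the block-matrix commutator relations above; everything else is routine manipulation of tensor norms.
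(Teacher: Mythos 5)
Your proof is correct and follows essentially the same route as the paper: split $\mathcal{Q}$ into the composition piece and the $\#$ piece, and for the latter decompose via $\mathcal{P}$ and $\bar{\mathcal{P}}$, using the Lie-algebra structure of $\Lambda^2\cong\mathfrak{so}(TM)$ and the self-adjointness of $A$, $B$, $\mathcal{P}$ to convert everything into $\circ\mathcal{P}$ norms. The only (minor, and perfectly valid) difference is that you invoke the full symmetric-pair grading $\mathcal{P}[\xi,\eta]=[\mathcal{P}\xi,\bar{\mathcal{P}}\eta]+[\bar{\mathcal{P}}\xi,\mathcal{P}\eta]$, which eliminates both the $\bar{\mathcal{P}}$--$\bar{\mathcal{P}}$ and the $\mathcal{P}$--$\mathcal{P}$ cross-terms at once, whereas the paper expands into four terms and discards only the $\bar{\mathcal{P}}$--$\bar{\mathcal{P}}$ term (using that $\operatorname{im}\bar{\mathcal{P}}$ is a subalgebra orthogonal to $\operatorname{im}\mathcal{P}$) before estimating the remaining three.
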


\begin{proof}
Clearly,
\begin{equation*}
| (A \circ B + B \circ A) \circ \mathcal{P}| \le | A \circ \mathcal{P} | |B| + | A | | B \circ \mathcal{P}|.
\end{equation*}
Furthermore, for $\eta \in \Lambda^2 T^*M$,
\begin{equation*}
\begin{split}
\big( (A \# B)\circ \mathcal{P} \big) (\eta) & = \frac{1}{2} \sum_{\alpha, \beta} \langle [ A \omega_\alpha, B \omega_\beta] , \mathcal{P} \eta \rangle \cdot [ \omega_\alpha, \omega_\beta ] \\
& = \frac{1}{2} \sum_{\alpha, \beta} \langle [ \mathcal{P} \circ A \omega_\alpha, \mathcal{P} \circ B \omega_\beta] , \mathcal{P} \eta \rangle \cdot [ \omega_\alpha, \omega_\beta ] \\
& \quad + \frac{1}{2} \sum_{\alpha, \beta} \langle [ \bar{\mathcal{P}} \circ A \omega_\alpha, \mathcal{P} \circ B \omega_\beta] , \mathcal{P} \eta \rangle \cdot [ \omega_\alpha, \omega_\beta ] \\
& \quad  + \frac{1}{2} \sum_{\alpha, \beta} \langle [ \mathcal{P} \circ A \omega_\alpha, \bar{\mathcal{P}} \circ B \omega_\beta] , \mathcal{P} \eta \rangle \cdot [ \omega_\alpha, \omega_\beta ] \\
&\quad +  \frac{1}{2} \sum_{\alpha, \beta} \langle [ \bar{\mathcal{P}} \circ A \omega_\alpha, \bar{\mathcal{P}} \circ B \omega_\beta] , \mathcal{P} \eta \rangle \cdot [ \omega_\alpha, \omega_\beta ],\\
\end{split}
\end{equation*}
where $\{ \omega_\alpha\}$ is an orthonormal basis for $\Lambda^2 T^*M$. The final term on the right hand side is zero (see \cite{kholonomy}, Lemma 3.5); the point is that the image of $\bar{\mathcal{P}}$ is closed under the bracket and is perpendicular to the image of $\mathcal{P}$. Moreover, $\mathcal{P} \circ A = (A \circ \mathcal{P})^*$ and $\mathcal{P} \circ B = (B \circ \mathcal{P})^*$, so it follows that
\begin{equation*}
\begin{split}
| (A \# B) \circ \mathcal{P}| & \le C \big( | A \circ \mathcal{P}| |B \circ \mathcal{P}| + | A \circ \bar{\mathcal{P}} | | B \circ \mathcal{P} | + |A \circ \mathcal{P}| | B \circ \bar{\mathcal{P}}| \big)\\
& \le C \big( |A \circ \mathcal{P}| |B| + |A| |B \circ \mathcal{P}| \big),\\
\end{split}
\end{equation*}
completing the proof.
\end{proof}

\begin{prop}\label{schematicequationsr}
As defined above, $\mathcal{R}$, $\mathcal{S}$, and $\mathcal{T}$ satisfy the inequalities

\begin{equation}\label{rnorms}
\begin{split}
|(D_t - \Delta) \mathcal{R}| & \le C(|\Rm| | \mathcal{R}| + |\nabla \Rm| |\nabla \mathcal{P}| + |\Rm| |\nabla^2 \mathcal{P}| ), \\
|(D_t - \Delta) \mathcal{S}| & \le C( |\nabla \Rm| |\mathcal{R}| + |\Rm| |\mathcal{S}| + |\nabla^2 \Rm||\nabla \mathcal{P}| + |\nabla \Rm| |\nabla^2 \mathcal{P}| ),\\
|(D_t - \Delta) \mathcal{T}| & \le C( |\nabla^2 \Rm | | \mathcal{R}| + |\nabla \Rm| |\mathcal{S}| + |\Rm| |\mathcal{T}|\\
& \quad  + (|\nabla \Rm| | \Rm| + |\nabla^3 \Rm|) |\nabla \mathcal{P}| + |\nabla^2 \mathcal{R}| |\nabla^2 \mathcal{P}|),\\
\end{split}
\end{equation}
where $C = C(n) >0$.

\end{prop}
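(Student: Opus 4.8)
The plan is to compute each of the three evolution inequalities by the same two-step procedure used for Proposition \ref{schematicequationsp}: first express $(D_t - \Delta)$ applied to $\mathcal{R}$, $\mathcal{S}$, $\mathcal{T}$ schematically in terms of the curvature evolution $(D_t - \Delta)\Rm = \mathcal{Q}(\Rm,\Rm)$, the commutator formulas (\ref{dtcomm}) and (\ref{heatcomm}), and the already-established equations $D_t \mathcal{P} = 0$, $D_t \nabla \mathcal{P} = \Rm * \nabla \mathcal{P} + \mathcal{P} * \mathcal{S}$, and $D_t \nabla^2 \mathcal{P} = \Rm * \nabla^2 \mathcal{P} + \nabla \Rm * \nabla \mathcal{P} + \mathcal{P} * \mathcal{T} + \nabla \Rm * \mathcal{P} * \nabla \mathcal{P}$; then take norms and invoke Lemma \ref{q} to control the $\mathcal{Q}$-term.

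For $\mathcal{R} = \Rm \circ \mathcal{P}$, I would write $(D_t - \Delta)(\Rm \circ \mathcal{P}) = ((D_t - \Delta)\Rm) \circ \mathcal{P} + \Rm \circ (D_t \mathcal{P}) - 2\nabla\Rm * \nabla\mathcal{P} - \Rm \circ \Delta\mathcal{P}$, using the Leibniz rule for $\Delta$ and the fact that $D_t$ differentiates the composition with no extra $\Lambda$-terms since $D_t$ acts as a derivation compatible with the metric. The second term vanishes, the third is $\nabla\Rm * \nabla\mathcal{P}$, and the fourth is $\Rm * \nabla^2\mathcal{P}$. The first term is $\mathcal{Q}(\Rm,\Rm) \circ \mathcal{P}$, which by Lemma \ref{q} (with $A = B = \Rm$) is bounded by $C|\Rm||\Rm \circ \mathcal{P}| = C|\Rm||\mathcal{R}|$. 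Collecting these gives the first line of (\ref{rnorms}). For $\mathcal{S} = (\nabla\Rm)\circ(\Id \times \mathcal{P})$ and $\mathcal{T} = (\nabla^2\Rm)\circ(\Id\times\Id\times\mathcal{P})$, the same strategy applies after first differentiating the curvature evolution equation: $(D_t - \Delta)\nabla\Rm = \nabla\mathcal{Q}(\Rm,\Rm) + [D_t - \Delta, \nabla]\Rm = \Rm * \nabla\Rm + \Rm*\Lambda*\nabla\Rm$ schematically, and similarly $(D_t-\Delta)\nabla^2\Rm = \Rm*\nabla^2\Rm + \nabla\Rm*\nabla\Rm + (\text{lower-order})$. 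Composing with $\mathcal{P}$ in the last slot, commuting $\mathcal{P}$ past the extra $\nabla$'s at the cost of $\nabla\mathcal{P}$-terms (which, crucially, pick up a factor of $\mathcal{P}$ or $\bar{\mathcal{P}}$ so that $\nabla\mathcal{P} * \Rm$ can be grouped either as $|\Rm||\nabla\mathcal{P}|$ directly or, when a full curvature derivative is adjacent to $\mathcal{P}$, reassembled into $\mathcal{R}$, $\mathcal{S}$, or $\mathcal{T}$), and reapplying Lemma \ref{q} to the $\mathcal{Q}$-derivative terms yields the stated bounds.

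The main subtlety — and where I expect the real bookkeeping to lie — is in the $\mathcal{T}$ inequality, specifically producing the terms $|\nabla^2\Rm||\mathcal{R}|$ and $|\nabla\Rm||\mathcal{S}|$ rather than the naive $|\nabla^2\Rm||\nabla^2\mathcal{P}|$-type terms that a blind Leibniz expansion would give. The point is that in expressions like $\nabla^2\Rm * \nabla\mathcal{P}$ one must use that $\nabla\mathcal{P}$ always appears contracted against a slot that can be reorganized: writing $\nabla\mathcal{P} = \nabla(\mathcal{P}) $ and integrating the derivative onto the curvature factor via the product rule turns $\nabla^2\Rm * \mathcal{P} * (\text{stuff})$ back into pieces of $\mathcal{S}$ and $\mathcal{T}$, at the cost of genuinely lower-order terms like $|\nabla\Rm||\Rm||\nabla\mathcal{P}|$ and $|\nabla^3\Rm||\nabla\mathcal{P}|$ which are exactly the ones appearing on the last line of (\ref{rnorms}). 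The term $|\nabla^2\mathcal{R}||\nabla^2\mathcal{P}|$ arises from the second-order-in-$\mathcal{P}$ piece of the curvature-derivative evolution where no such reorganization is available. One must also carry along the $\bar{\mathcal{P}}$-orthogonality identity (the vanishing fourth term in Lemma \ref{q}) whenever a $\mathcal{Q}$-type nonlinearity is post-composed with $\mathcal{P}$; this is what prevents the appearance of an uncontrolled $|\Rm|^2|\nabla\mathcal{P}|$ with no $\mathcal{R}$ factor. Beyond this, the computation is a routine but lengthy application of (\ref{dtcomm}), (\ref{heatcomm}), and the product rule, entirely parallel to Propositions 4.5–4.7 of \cite{kholonomy}.
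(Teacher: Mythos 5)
Your overall strategy is the same as the paper's: write $(D_t-\Delta)$ applied to $\mathcal{R}$, $\mathcal{S}$, $\mathcal{T}$ via the Leibniz rule and the curvature evolution $(D_t-\Delta)\Rm=\mathcal{Q}(\Rm,\Rm)$, apply the commutator formula (\ref{heatcomm}) to handle the extra gradients, and use Lemma~\ref{q} on the $\mathcal{Q}$-terms. The $\mathcal{R}$ case is handled exactly as in the paper, and your observation that the delicate part is converting curvature-times-curvature terms into the controlled quantities $\mathcal{R}$, $\mathcal{S}$, $\mathcal{T}$ is on target.

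However, you have misidentified both \emph{which} terms need that reorganization and the \emph{mechanism} that performs it. The terms you flag, of the form $\nabla^2\Rm*\nabla\mathcal{P}$, $\nabla^2\Rm*\nabla^2\mathcal{P}$, $\nabla^3\Rm*\nabla\mathcal{P}$, do not require any reorganization at all: in the paper's proof they go directly into the bound as $|\nabla^2\Rm||\nabla\mathcal{P}|$, $|\nabla^2\Rm||\nabla^2\mathcal{P}|$, $|\nabla^3\Rm||\nabla\mathcal{P}|$, which are precisely the admissible right-hand-side terms in (\ref{rnorms}). The genuine difficulty lies elsewhere, in the commutator contribution $\bigl([D_t-\Delta,\nabla]\nabla^{(j)}\Rm\bigr)\circ\mathcal{P}$, whose leading piece $R_{abdc}\Lambda^c_d\nabla_b\nabla^{(j)}\Rm\circ\mathcal{P}$ naively yields $|\Rm||\nabla^{(j+1)}\Rm|$ — a quantity \emph{not} controlled by the closed system. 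The paper resolves this by the purely algebraic identity (Proposition 4.13 of \cite{kholonomy})
\begin{equation*}
R_{abdc}\,\Lambda^c_d\,\nabla_b\nabla^{(j)}\Rm\circ\mathcal{P}=\Rm*\bigl(\nabla^{(j)}\Rm\circ\mathcal{P}\bigr)+\nabla^{(j+1)}\Rm*\mathcal{R}*\mathcal{P},
\end{equation*}
which recognizes that the $\Lambda$-operator either acts on indices paired with $\mathcal{P}$ (producing a $\mathcal{R}$-factor) or on residual curvature indices (reproducing $\mathcal{S}$ or $\mathcal{T}$). Your proposal instead describes ``integrating the derivative onto the curvature factor via the product rule,'' which is not what happens here (there is no integration by parts, and attempting to move a $\nabla$ off $\nabla\mathcal{P}$ onto $\Rm$ would introduce uncontrolled terms like $\nabla\mathcal{T}$). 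Since that algebraic $\Lambda$-identity is the single non-routine ingredient in this proposition, the gap is substantive rather than cosmetic, even though your surrounding skeleton and the invocation of Lemma~\ref{q} are correct.
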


\begin{proof}

Using the evolution equation for $\Rm$, we have
\begin{equation*}
\begin{split}
(D_t - \Delta) \mathcal{R} & = \mathcal{Q} (\Rm, \Rm) \circ \mathcal{P} + \Rm \circ \Delta \mathcal{P} + 2 \nabla \Rm * \nabla \mathcal{P}.\\
\end{split}
\end{equation*}
The first inequality then follows immediately from Lemma \ref{q}.

We now compute the evolution equation for $\mathcal{S}$. First, note that
\begin{equation}\label{sid1}
\begin{split}
(D_t - \Delta) \mathcal{S} & = ([D_t -\Delta, \nabla] \Rm ) \circ \mathcal{P} + \nabla ((D_t - \Delta) \Rm) \circ \mathcal{P} \\
& \quad + \nabla^2 \Rm * \nabla \mathcal{P} + \nabla \Rm * \nabla^2 \mathcal{P}.\\
\end{split}
\end{equation}
For the first term, using the commutator (\ref{heatcomm}), we have
\begin{equation*}
[(D_t-\Delta), \nabla_a] R_{ijkl} = 2 R_{abdc} \Lambda^c_d \nabla_b R_{ijkl} + 2 R_{ab} \nabla_b R_{ijkl}. \\
\end{equation*}
As in the computation in Proposition 4.13 from \cite{kholonomy}, we have
\begin{equation*}
R_{abdc} \Lambda^c_d \nabla_b R_{mnkl} \mathcal{P}_{ijmn} = \Rm * \mathcal{S} + \nabla \Rm * \mathcal{R} * \mathcal{P},
\end{equation*}
which gives us
\begin{equation}
\begin{split}\label{sid2}
([D_t - \Delta, \nabla] \Rm) \circ \mathcal{P} & = \Rm * \mathcal{S} + \nabla \Rm * \mathcal{R} * \mathcal{P}.\\
\end{split}
\end{equation}
We then compute
\begin{equation*}
\begin{split}
\nabla ((D_t - \Delta) \Rm) & = \nabla \mathcal{Q} (\Rm, \Rm) \\
& = \nabla \Rm \circ \Rm + \Rm \circ \nabla \Rm + \nabla \Rm \# \Rm + \Rm \# \nabla \Rm \\
& = 2 \mathcal{Q}(\nabla \Rm, \Rm),\\
\end{split}
\end{equation*}
where we regard $\nabla \Rm$ as a one form with values in $\mathrm{Sym} (\Lambda^2 T^*M)$. Then, applying Lemma \ref{q} and combining the result in (\ref{sid2}) with (\ref{sid1}), we obtain the second inequality.

For the third inequality, we begin with the identity
\begin{equation*}
(D_t - \Delta) \mathcal{T} = \left( (D_t - \Delta) \nabla^2 \Rm \right) \circ \mathcal{P} + \nabla^2 \Rm * \nabla^2 \mathcal{P} + \nabla^3 \Rm * \nabla \mathcal{P}.
\end{equation*}
The first term can be rewritten as
 \begin{equation*}
 \begin{split}
 ((D_t - \Delta) \nabla^2 \Rm ) \circ \mathcal{P} & = ( [D_t - \Delta, \nabla] \nabla \Rm ) \circ \mathcal{P} + ( \nabla [D_t - \Delta, \nabla] \Rm) \circ \mathcal{P}\\
 & + ( \nabla \nabla (D_t - \Delta) \Rm) \circ \mathcal{P}.\\
 \end{split}
 \end{equation*}
Applying equation (\ref{heatcomm}) once again gives us
\begin{equation*}
\left( (D_t - \Delta) \nabla_a \nabla \Rm \right) \circ \mathcal{P} - \left( \nabla_a (D_t - \Delta) \nabla \Rm \right) \circ \mathcal{P} = \left( 2 R_{abdc} \Lambda^c_d \nabla_b \nabla \Rm + 2 R_{ab} \nabla_b \nabla \Rm \right) \circ \mathcal{P},
\end{equation*}
and we have
\begin{equation*}
 R_{abdc} \Lambda^c_d \nabla_b \nabla \Rm \circ \mathcal{P} = \Rm * \mathcal{T} + \nabla^2 \Rm * \mathcal{R} * \mathcal{P}
 \end{equation*}
 (again see \cite{kholonomy}, Proposition 4.13, also \cite{liuszek}). 
 We can see that
 \begin{equation*}
 \begin{split}
& ( \nabla[D_t - \Delta, \nabla] \Rm) \circ \mathcal{P} = \nabla ( ([D_t-\Delta, \nabla]\Rm) \circ \mathcal{P}) + ([D_t- \Delta, \nabla]\Rm) * \nabla \mathcal{P}\\
 & \quad = \nabla \Rm * \mathcal{S} + \Rm * \mathcal{T} + \Rm* \nabla \Rm * \nabla \mathcal{P} + \nabla^2 \Rm * \mathcal{R} * \mathcal{P} + \nabla \Rm * \mathcal{S} * \mathcal{P} \\
& \quad \quad + \nabla \Rm * \Rm * \nabla \mathcal{P} * \mathcal{P} + \nabla \Rm * \mathcal{R} * \nabla \mathcal{P} + \Rm * \nabla \Rm * \nabla \mathcal{P}\\
\end{split}
\end{equation*}
where we again use the facts that $\nabla \mathcal{R} = \mathcal{S} + \Rm * \nabla \mathcal{P}$ and $\nabla \mathcal{S} = \mathcal{T} + \nabla \Rm * \nabla \mathcal{P}$. Additionally,
\begin{equation*}
(\nabla \nabla (D_t - \Delta) \Rm ) \circ \mathcal{P} = 2 \mathcal{Q} (\nabla^2 \Rm, \Rm) \circ \mathcal{P} + 2 \mathcal{Q}(\nabla \Rm, \nabla \Rm ) \circ \mathcal{P}.
\end{equation*}
Combining the above identities and again applying Lemma \ref{q} to the last term, we obtain the third inequality.

\end{proof}

\subsection{Constructing a PDE-ODE system}
With an eye toward Theorem 1, we now organize the tensors $\nabla \mathcal{P}$, $\nabla^2 \mathcal{P}$, $\mathcal{R}$, $\mathcal{S}$, and $\mathcal{T}$ into groupings which satisfy a closed system of differential inequalities. Let 
\begin{equation*}
\begin{split}
\mathcal{X}  = \mathcal{T}^4 (T^*M) \oplus \mathcal{T}^5 (T^*M) \oplus \mathcal{T}^6(T^*M), \quad \mathcal{Y}  = \mathcal{T}^5 (T^*M) \oplus \mathcal{T}^6( T^*M),
\end{split}
\end{equation*}
and define families of sections $\mathbf{X} = \mathbf{X}(t)$ of $\mathcal{X}$ and $\mathbf{Y}=\mathbf{Y}(t)$ of $\mathcal{Y}$ for $t \in (0,T]$ by
\begin{equation}\label{systemdef}
\mathbf{X} = \left( \frac{\mathcal{R}}{t}, \frac{\mathcal{S}}{t^{1/2}}, \mathcal{T} \right), \qquad \mathbf{Y} = \left( \frac{\nabla \mathcal{P}}{\sqrt{t}}, \nabla^2 \mathcal{P} \right).
\end{equation}

\begin{prop}\label{systembound}
If $g(t)$ is a smooth solution to Ricci flow on $M \times [0,T]$ with $|\Rm| (x,t) < a/t$ for some $a>0$, then there exists a constant $C=C(a,n)>0$ depending such that $\mathbf{X}$ and $\mathbf{Y}$ satisfy
\begin{equation*}
|(D_t - \Delta) \mathbf{X}| \le C \left( \frac{1}{t} |\mathbf{X}| + \frac{1}{t^2} |\mathbf{Y}| \right), \quad |D_t \mathbf{Y}| \le C \left( |\mathbf{X}| + \frac{1}{t} |\mathbf{Y}| \right),
\end{equation*}
on $M \times (0,T]$.
\end{prop}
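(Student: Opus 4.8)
The plan is to feed the pointwise curvature--derivative bounds of Shi's estimates into the evolution inequalities already recorded in Propositions~\ref{schematicequationsp} and~\ref{schematicequationsr}, and then to check that the weights $t^{-1},t^{-1/2},1$ and $t^{-1/2},1$ appearing in \eqref{systemdef} are exactly the ones that close the system. The only genuinely analytic input is Shi's derivative estimate: rescaling parabolically on the interval $[t/2,t]$, on which $|\Rm|\le 2a/t$, and applying the local estimates yields a constant $C(m,n,a)$ with
\[
|\nabla^m\Rm|(x,t)\le \frac{C(m,n,a)}{t^{1+m/2}},\qquad m=1,2,3,
\]
on $M\times(0,T]$ (in the intended application $M$ is complete, so the spatial scale in the estimate may be sent to infinity). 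After this the proof is pure bookkeeping with powers of $t$, and the one thing to watch --- the only place the specific weights in \eqref{systemdef} matter --- is that every product of a curvature-derivative factor with one of $\mathcal{R},\mathcal{S},\mathcal{T},\nabla\mathcal{P},\nabla^2\mathcal{P}$ lands at exactly the scaling $t^{-1}|\mathbf{X}|$ or $t^{-2}|\mathbf{Y}|$, and never worse.

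For $\mathbf{Y}$, expand $D_t(\nabla\mathcal{P}/\sqrt t)=t^{-1/2}D_t\nabla\mathcal{P}-\tfrac12 t^{-3/2}\nabla\mathcal{P}$ and use the first line of \eqref{pnorms}: $t^{-1/2}|\Rm|\,|\nabla\mathcal{P}|\le (a/t)\,(|\nabla\mathcal{P}|/\sqrt t)$, while $t^{-1/2}|\mathcal{S}|=|\mathcal{S}|/\sqrt t$ and $t^{-3/2}|\nabla\mathcal{P}|=t^{-1}(|\nabla\mathcal{P}|/\sqrt t)$; since $|\nabla\mathcal{P}|/\sqrt t\le|\mathbf{Y}|$ and $|\mathcal{S}|/\sqrt t\le|\mathbf{X}|$, this component contributes $C(|\mathbf{X}|+t^{-1}|\mathbf{Y}|)$. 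For the $\nabla^2\mathcal{P}$ component, the second line of \eqref{pnorms} with $|\nabla\Rm|\le C t^{-3/2}$ gives $|\Rm|\,|\nabla^2\mathcal{P}|\le(a/t)|\mathbf{Y}|$, $|\nabla\Rm|\,|\nabla\mathcal{P}|\le (C/t)(|\nabla\mathcal{P}|/\sqrt t)\le(C/t)|\mathbf{Y}|$, and $|\mathcal{T}|\le|\mathbf{X}|$. Summing the two components gives the bound on $|D_t\mathbf{Y}|$.

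The estimate for $\mathbf{X}$ is the same with one more derivative of $\Rm$ in play. Because $D_t-\Delta$ and $\Delta$ act summand-by-summand and each component norm is $\le|\mathbf{X}|$, it suffices to bound $(D_t-\Delta)(\mathcal{R}/t)=t^{-1}(D_t-\Delta)\mathcal{R}-t^{-2}\mathcal{R}$, $(D_t-\Delta)(\mathcal{S}/\sqrt t)=t^{-1/2}(D_t-\Delta)\mathcal{S}-\tfrac12 t^{-3/2}\mathcal{S}$, and $(D_t-\Delta)\mathcal{T}$ separately. Plugging in \eqref{rnorms} and the Shi bounds, every resulting term is either a bounded multiple of $t^{-1}$ times a component of $\mathbf{X}$ --- e.g.\ $t^{-1}|\Rm|\,|\mathcal{R}|\le(a/t)(|\mathcal{R}|/t)$, $|\nabla^2\Rm|\,|\mathcal{R}|\le Ct^{-1}(|\mathcal{R}|/t)$, $|\nabla\Rm|\,|\mathcal{S}|\le Ct^{-1}(|\mathcal{S}|/\sqrt t)$, $|\Rm|\,|\mathcal{T}|\le(a/t)|\mathcal{T}|$ --- or a bounded multiple of $t^{-2}$ times a component of $\mathbf{Y}$ --- e.g.\ $t^{-1}|\nabla\Rm|\,|\nabla\mathcal{P}|$, $t^{-1/2}|\nabla^2\Rm|\,|\nabla\mathcal{P}|$, $|\nabla^3\Rm|\,|\nabla\mathcal{P}|$, $|\nabla^2\Rm|\,|\nabla^2\mathcal{P}|$, each of which Shi's bounds render $\le Ct^{-2}|\mathbf{Y}|$. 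Collecting everything gives $|(D_t-\Delta)\mathbf{X}|\le C(t^{-1}|\mathbf{X}|+t^{-2}|\mathbf{Y}|)$. The only inputs used are Lemma~\ref{q} and the inequalities \eqref{pnorms}, \eqref{rnorms}; no cancellation is needed, so the whole difficulty is the exponent accounting described above.
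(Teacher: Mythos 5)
Your proof is correct and follows essentially the same route as the paper: bound $|\nabla^m\Rm|$ by $Ct^{-1-m/2}$ via Shi's estimates after parabolic rescaling (this is Proposition~\ref{curvatureestimates} in the paper), expand $D_t\mathbf{Y}$ and $(D_t-\Delta)\mathbf{X}$ component-by-component with the product rule, and feed the resulting terms through \eqref{pnorms} and \eqref{rnorms}, matching powers of $t$ against the weights in \eqref{systemdef}. The exponent bookkeeping in each component checks out and matches the paper's computation.
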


\begin{remark}
Inspection of the proof reveals that the constant $C$ in fact has the form $C=a \tilde{C}$, where $\tilde{C}$ depends only on $n$ and $\max \{ a, 1 \}$.
\end{remark}
This follows directly from Propositions \ref{schematicequationsp} and \ref{schematicequationsr} with the help of the following curvature bounds, which can be obtained from the classical estimates of Shi \cite{shideformmetric} with a simple rescaling argument.

\begin{prop}\label{curvatureestimates}
Suppose $(M,g(t))$ is a complete solution to Ricci flow for $t \in [0,T]$ which satisfies
\begin{equation*}
| \Rm | (x,t) \le \frac{a}{t}
\end{equation*}
for some constant $a > 0$. Then for each $m > 0$, there exists a constant $C = C(m,n)$ such that
\begin{equation*}
|\nabla^{(m)} \Rm | (x,t) \le \frac{a C}{t^{m/2+1}} (1 + a^{m/2}).
\end{equation*}

\end{prop}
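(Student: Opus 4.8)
\emph{Proof proposal.} The plan is to deduce this from the global form of Shi's derivative estimate by a parabolic rescaling localized to a short time interval near the time in question; this is the ``simple rescaling argument'' referred to above. The version of Shi's estimate I would invoke \cite{shideformmetric} is: there is a constant $C_0 = C_0(m,n)$ such that every complete solution $(M, \bar g(s))$ of \eqref{rf} on $[0,\sigma]$ with $\sup_{M \times [0,\sigma]}|\overline{\Rm}|_{\bar g} \le K$ satisfies $|\nabla^m \overline{\Rm}|_{\bar g}(x,s) \le C_0 K\,(s^{-m/2} + K^{m/2})$ for all $x \in M$ and $s \in (0,\sigma]$, the term $K^{m/2}$ covering the range $s > 1/K$ (obtained by translating the time origin).

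First I would fix an arbitrary point $x \in M$ and time $t_0 \in (0,T]$; it suffices to estimate $|\nabla^m\Rm|(x,t_0)$. Restrict $g$ to $[t_0/2, t_0]$ and set $\bar g(s) := \tfrac{1}{t_0}\, g\big(t_0(s+\tfrac12)\big)$ for $s \in [0,\tfrac12]$. Since the Ricci tensor is scale invariant, $\bar g$ is again a solution of \eqref{rf}; it is complete because $g$ is; and its curvature satisfies $|\overline{\Rm}|_{\bar g}(x,s) = t_0\,|\Rm|_g\big(x,t_0(s+\tfrac12)\big) \le \frac{a}{s+1/2} \le 2a$ on $M \times [0,\tfrac12]$. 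Thus $\bar g$ is a complete, bounded-curvature solution on a time interval of fixed length $\tfrac12$ with curvature bound $K = 2a$.

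Next I would apply the Shi estimate above to $\bar g$ at $s = \tfrac12$, obtaining $|\nabla^m\overline{\Rm}|_{\bar g}(x,\tfrac12) \le C_0\,(2a)\big(2^{m/2} + (2a)^{m/2}\big) \le C_1(m,n)\,a\,(1+a^{m/2})$. Finally, undoing the rescaling --- $|\nabla^m\Rm|$ scales by $\lambda^{-(m/2+1)}$ under $g \mapsto \lambda g$ --- gives $|\nabla^m\Rm|_g(x,t_0) = t_0^{-(m/2+1)}\,|\nabla^m\overline{\Rm}|_{\bar g}(x,\tfrac12)$, and combining with the previous bound yields $|\nabla^m\Rm|(x,t_0) \le C_1(m,n)\,a\,(1+a^{m/2})\,t_0^{-(m/2+1)}$, which is the assertion with $C = C_1$.

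There is no serious obstacle here --- the mathematical content is entirely Shi's theorem. The only points needing a little care are: (i) the rescaled flow $\bar g$ must be a genuine complete, bounded-curvature solution on the \emph{closed} interval $[0,\tfrac12]$ (even though $|\Rm|_g$ blows up as $t \to 0^+$, the bound $a/t$ is finite and continuous on $[t_0/2,t_0]$, so after restricting and rescaling the curvature is bounded by $2a$ up to and including $s = 0$); and (ii) one must retain the $K^{m/2}$ term in Shi's estimate, since for $a > 1$ the evaluation time $s = \tfrac12$ exceeds $1/K = 1/(2a)$, and then bookkeep the factors of $2$ and $a$ to see that the final constant depends only on $m$ and $n$.
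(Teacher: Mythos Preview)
Your proposal is correct and is exactly the ``simple rescaling argument'' the paper alludes to (the paper gives no proof beyond that phrase). The parabolic rescaling onto $[t_0/2,t_0]$, the application of Shi's global estimate with $K=2a$, and the scaling of $|\nabla^m\Rm|$ are all handled accurately, including the bookkeeping of the $K^{m/2}$ term that produces the $(1+a^{m/2})$ factor.
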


\begin{proof}[Proof of Proposition \ref{systembound}]

Throughout this proof, $C$ will denote a constant which may change from line to line but depends only on $n$ and $a$. Using (\ref{pnorms}) in combination with the curvature estimates, we obtain

\begin{equation*}
\begin{split}
|D_t \mathbf{Y}| & \le  \frac{1}{2} t^{-3/2} |\nabla \mathcal{P}| + t^{-1/2} | D_t \nabla \mathcal{P}| + |D_t \nabla^2 \mathcal{P}|\\
& \le C t^{-1/2} |\mathcal{S}| + C |\mathcal{T}| + C t^{-3/2} |\nabla \mathcal{P}| + C t^{-1} |\nabla^2 \mathcal{P}|\\
& \le C |\mathbf{X}| + \frac{C}{t} |\mathbf{Y}|.\\
\end{split}
\end{equation*}

Applying the curvature estimates to the inequalities (\ref{rnorms}) for $\mathcal{R}$, $\mathcal{S}$, and $\mathcal{T}$, we get
\begin{equation*}
|(D_t - \Delta) \mathcal{R}| \le C t^{-1} |\mathcal{R}| + Ct^{-3/2} |\nabla \mathcal{P}| + Ct^{-1} |\nabla^2 \mathcal{P}|,
\end{equation*}
\begin{equation*}
|(D_t - \Delta) \mathcal{S}| \le C t^{-3/2} |\mathcal{R}| + C t^{-1} |\mathcal{S}| + Ct^{-2} |\nabla \mathcal{P}| + C t^{-3/2} |\nabla^2 \mathcal{P}|,
\end{equation*}
and
 \begin{equation*}
 |(D_t - \Delta) \mathcal{T}| \le C t^{-2} |\mathcal{R}| + C t^{-3/2} |\mathcal{S}| + Ct^{-1} |\mathcal{T}| + Ct^{-5/2} |\nabla \mathcal{P}| + t^{-2} |\nabla^2 \mathcal{P}|.
 \end{equation*}
Combining these equations, we have
 \begin{equation*}
 \begin{split}
 |(D_t - \Delta) \mathbf{X}| & \le t^{-1} |(D_t-\Delta) \mathcal{R}| + t^{-2} |\mathcal{R}| + t^{-1/2} |(D_t - \Delta) \mathcal{S}| + \frac{1}{2} t^{-3/2} |\mathcal{S}| \\
 & \quad + |(D_t - \Delta) \mathcal{T}|\\
 & \le Ct^{-2} |\mathcal{R}| + C t^{-3/2} |\mathcal{S}| + C t^{-1} |\mathcal{T}| + Ct^{-5/2} |\nabla \mathcal{P}| + Ct^{-2} |\nabla^2 \mathcal{P}|\\
 & \le Ct^{-1} |\mathbf{X}| + Ct^{-2} |\mathbf{Y}|,\\
 \end{split}
 \end{equation*} 
 as desired.
 \end{proof}

\section{A general uniqueness theorem for PDE-ODE systems}

We now aim to show that $\mathbf{X}$ and $\mathbf{Y}$ vanish using a maximum principle from \cite{huangtam} by adapting it to apply to a general PDE-ODE system. The following theorem is essentially a reformulation of Lemma 2.3 in \cite{huangtam} and Lemma 2.1 in \cite{liuszek}.

\begin{thm}\label{maxprinsystem}
Let $M = M^n$ and $\mathcal{X}$ and $\mathcal{Y}$ be finite direct sums of $T^k_l(M)$. There exists an $\epsilon = \epsilon (n) > 0$ with the following property: Whenever $g(t)$ is a smooth, complete solution to the Ricci flow on $M$ satisfying
\begin{equation*}
|\Rm| \le \frac{\epsilon}{t}
\end{equation*}
on $M \times (0,T]$, and $\mathbf{X} = \mathbf{X}(t)$ and $\mathbf{Y} = \mathbf{Y}(t)$ are families of smooth sections of $\mathcal{X}$ and $\mathcal{Y}$ satisfying
\begin{equation*}
|(D_t - \Delta) \mathbf{X}| \le \frac{C}{t} |\mathbf{X}| + \frac{C}{t^2} |\mathbf{Y}|, \quad | D_t \mathbf{Y} | \le  C |\mathbf{X}| + \frac{C}{t} | \mathbf{Y}|,
\end{equation*}
\begin{equation*}
D_t^k \mathbf{Y} = 0, \quad D_t^k \mathbf{X} = 0 \text{ for } k \ge 0 \text{ at } t=0,
\end{equation*}
and
\begin{equation*}
| \mathbf{X} | \le C t^{-l},
\end{equation*}
for some $C>0$, $l > 0$, then $\mathbf{X} \equiv 0$ and $\mathbf{Y} \equiv 0$ on $M \times [0,T]$.
\end{thm}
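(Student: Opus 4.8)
The plan is to repackage $\mathbf X$ and $\mathbf Y$ as a single coupled scalar pair and then run a weighted integral estimate of the type introduced by Huang-Tam \cite{huangtam} and exploited by Liu-Sz\'ekelyhidi \cite{liuszek}. First I would set $u = |\mathbf X|^2$ and $v = t^{-2}|\mathbf Y|^2$. Since $D_t$ is metric-compatible and acts as $\partial_t$ on scalars, $\partial_t u = 2\langle D_t\mathbf X, \mathbf X\rangle$ and $\partial_t v = -2t^{-1}v + 2t^{-2}\langle D_t\mathbf Y, \mathbf Y\rangle$, while $\Delta u = 2\langle \Delta\mathbf X, \mathbf X\rangle + 2|\nabla\mathbf X|^2$. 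Substituting the hypothesized inequalities and using $2\sqrt{uv} \le u+v$ gives, for a constant $\mathbf a = \mathbf a(C,n)$,
\begin{equation*}
(\partial_t - \Delta)u \le \frac{\mathbf a}{t}(u+v) - 2|\nabla\mathbf X|^2, \qquad \partial_t v \le \frac{\mathbf a}{t}(u+v)
\end{equation*}
on $M \times (0,T]$, with $u, v \ge 0$ vanishing to infinite order as $t \to 0$, and, using the polynomial bound on $\mathbf X$ together with the $\mathbf Y$-inequality, $u + v \le \mathbf a\, t^{-m}$ for some $m$. The structural point is that the $v$-equation carries no Laplacian, so the lack of a priori spatial regularity for $\mathbf Y$ will cause no difficulty below.

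Next I would fix $p \in M$, set $r(x,t) = d_{g(t)}(x,p)$, and choose a Ricci-flow-adapted Gaussian weight $\rho = \rho(x,t)$ (morally $\rho \sim e^{-\gamma r^2/t}$ with $\gamma = \gamma(n) > 0$, adjusted as in \cite{huangtam}) so that, using the Laplacian comparison theorem and the distance-distortion estimate --- both available because $|\Rm| \le \epsilon t^{-1}$ forces $\Rc \ge -(n-1)\epsilon t^{-1} g$ and $|\partial_t r| \le (n-1)\epsilon t^{-1} r$ --- each of $\Delta\rho$, $\partial_t\rho$ and $R\rho$ is controlled in the barrier sense by $C_\star t^{-1}\rho$. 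It is here that $\epsilon$ must be small and dimensional: the comparison estimates produce curvature error terms of relative size $\sqrt{\epsilon/t}\,r$ and $\epsilon r^2/t^2$, and only for $\epsilon$ below a threshold $\epsilon(n)$ can these be absorbed into the principal term $t^{-1}\rho$. Since $u$ is bounded on each time slice and $\rho$ decays at spatial infinity, $I(t) := \int_M (u+v)\,\rho\,dV_{g(t)}$ is finite; differentiating it, using $\partial_t\,dV_{g(t)} = -R\,dV_{g(t)}$, integrating $\Delta u$ by parts against $\rho$ (legitimate after a cutoff exhaustion, in which the discarded term $-2|\nabla\mathbf X|^2$ dominates the cross terms), and substituting the inequalities above, we obtain
\begin{equation*}
\frac{d}{dt}I(t) \le \frac{C'}{t}\,I(t), \qquad C' = C'(C,n).
\end{equation*}

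Integrating this, $I(t) \le I(s)(t/s)^{C'}$ for $0 < s < t \le T$, so it remains to show $I(s) = o(s^{C'})$ as $s \to 0$. Away from $p$ the Gaussian factor makes the contribution of $M \setminus B(p,\delta)$ to $I(s)$ of size $O(s^{-m}e^{-c/s})$, which is $o(s^N)$ for every $N$; on a fixed compact neighborhood of $p$ the hypothesis $D_t^k\mathbf X = D_t^k\mathbf Y = 0$ at $t = 0$, via Taylor expansion in $t$ with uniform remainder, gives $u + v \le C_{\delta,N}\,t^N$ for every $N$, so that part of $I(s)$ is $o(s^N)$ as well. Hence $I(s) = o(s^N)$ for all $N$; taking $N > C'$ and letting $s \to 0$ forces $I \equiv 0$, so $u \equiv v \equiv 0$ on $M \times (0,T]$, whence $\mathbf X \equiv \mathbf Y \equiv 0$ there and, by continuity, on all of $M \times [0,T]$.

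The hardest step will be the construction and estimation of the weight $\rho$: one needs a single weight that decays fast enough spatially for $I$ to be finite and for the cutoff terms to vanish on the noncompact $M$, that nonetheless has $\Delta\rho$, $\partial_t\rho$ and $R\rho$ all of order $t^{-1}\rho$ even though the curvature blows up like $\epsilon/t$ (this is what forces $\epsilon$ to depend only on $n$), and that behaves at $t = 0$ compatibly with the weak control available on $u$ and $v$ there. Pushing the curvature errors and the polynomial-in-$r$ prefactors through the differentiation of $I$ --- in practice by working with a one-parameter family of such weights and comparing nearby members --- is where essentially all the analysis lies; by contrast, the PDE-ODE coupling is handled for free once the argument is organized around the single integral $\int_M (u+v)\,\rho$.
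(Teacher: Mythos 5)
Your approach is genuinely different from the paper's, and it has a concrete gap. The paper does not run a weighted energy estimate. Instead it sets $F = t^{-k}|\mathbf X|^2$, $G = t^{-(k+1)}|\mathbf Y|^2$ for a large $k$, uses the ODE for $\mathbf Y$ to get the \emph{pointwise-in-$x$} bound $G(x,t) \le Ct\max_{0\le s\le t}F(x,s)$, and then feeds $(\partial_t-\Delta)F \le C^2 t^{-1}\max_{0\le s\le t}F(x,s)$ into a scalar maximum principle of Huang--Tam type (Proposition~\ref{maxprinscalar}), which is a pointwise argument built from a compactly supported cutoff $\Psi = \Phi(\rho/r)$ at scale $r$ (later sent to $\infty$) multiplied by a purely temporal weight $\theta = e^{-\alpha t^{1-\beta}}$. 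The point of the cutoff at scale $r$ is that the spatial-derivative errors scale like $r^{-1}, r^{-2}$ and vanish as $r\to\infty$; the $\epsilon$-dependence enters only through the $t^{-c\epsilon}$ factors in $|\nabla\rho|, |\Delta\rho|$ for the distance-like function $\rho$, with $c\epsilon < 1/4$.

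The gap in your proposal is in the claimed weight estimates. For $\rho = e^{-\gamma r^2/t}$ one has
\begin{equation*}
\partial_t\rho = \Bigl(\tfrac{\gamma r^2}{t^2} - \tfrac{2\gamma r\,\partial_t r}{t}\Bigr)\rho,
\qquad
\Delta\rho = \Bigl(\tfrac{4\gamma^2 r^2}{t^2}|\nabla r|^2 - \tfrac{\gamma}{t}\Delta(r^2)\Bigr)\rho,
\end{equation*}
so both $\partial_t\rho$ and $\Delta\rho$ carry \emph{positive} $r^2/t^2$ terms relative to $\rho$, uniformly in $\epsilon$. These are not $O(t^{-1}\rho)$ and cannot be absorbed into $C_\star t^{-1}\rho$; smallness of $\epsilon$ controls the curvature-error pieces ($\Delta(r^2)$, $\partial_t r$, $R$) but not these principal ones. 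Even after integrating by parts and discarding $-2|\nabla\mathbf X|^2$, one is left with a term of the form $\tfrac{c\gamma^2}{t^2}\int r^2\,u\,\rho$ which is not dominated by $\tfrac{C}{t}\int u\,\rho$, since $u$ need not be concentrated on $\{r\lesssim\sqrt t\}$. Worse, the ODE piece $v$ contributes $\int v(\partial_t\rho - R\rho)$ with no Laplacian to integrate by parts, and $\partial_t\rho$ alone already has the offending $r^2/t^2$ term. Replacing $t$ by $T_0 - t$ in the exponent flips the sign of the $r^2$ term in $\partial_t\rho$, but then $\rho$ no longer concentrates as $t\to 0$, so your step establishing $I(s) = o(s^{C'})$ collapses. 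In short, the single spatially-decaying weight you postulate cannot simultaneously make $I(t)$ almost-monotone and make $I(s)$ vanish to high order; resolving that tension is precisely what the cutoff-at-scale-$r$ maximum principle of Huang--Tam accomplishes, and it is not a detail one can defer.
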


The key ingredient in the proof of Theorem \ref{maxprinsystem} is an the following scalar maximum principle due to Huang-Tam \cite{huangtam} (and its variant in \cite{liuszek}). Though the statement has been slightly changed from its appearance in \cite{huangtam}, the proof is nearly identical. We detail here the modifications we make for completeness.
\begin{prop}[c.f. \cite{huangtam}, Lemma 2.3 and \cite{liuszek}, Lemma 2.1]\label{maxprinscalar}

Let $M$ be a smooth $n$-dimensional manifold. There exists an $\epsilon = \epsilon (n) > 0$ such that the following holds: Whenever\ $g(t)$ is a smooth complete solution to the Ricci flow on $M \times [0,T]$ such that the curvature satisfies $|\Rm | \le \epsilon/t$ and $f \ge 0$ is a smooth function on $M \times [0,T]$ satisfying

\begin{enumerate}
\item $\left( \partial_t - \Delta \right) f (x,t) \le a t^{-1}\max_{0 \le s \le t} f(x,s)$,
\item $\partial_t^k \big|_{t=0} f = 0$ for all $k \ge 0$,
\item $\sup_{x \in M} f(x,t) \le C t^{-l}$ for some positive integer $l$ for some constant $C$,
\end{enumerate}
then $f \equiv 0$ on $M \times [0,T]$.

\end{prop}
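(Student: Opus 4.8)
My plan is to reproduce the proof of Lemma~2.3 of \cite{huangtam} (equivalently Lemma~2.1 of \cite{liuszek}), indicating the modest changes needed to accommodate the hypotheses as stated. The strategy is to construct, for every $\delta>0$, a positive barrier $\phi=\phi_\delta$ on $M\times(0,T]$ dominating $f$, i.e.\ with $f\le\delta\phi$, and then to let $\delta\to0$. Two simplifications are convenient at the outset. First, one may assume $T$ is as small as desired: on any slab $M\times[T_0,T]$ with $T_0>0$ the curvature is bounded, $f$ is bounded by $CT_0^{-l}$, and, once the statement is known on $M\times[0,T_0]$, hypothesis~(2) furnishes vanishing data at $t=T_0$, so on $[T_0,T]$ the assertion reduces to a standard parabolic maximum principle on a complete manifold of bounded geometry (applied to the running maximum $F(x,t):=\max_{0\le s\le t}f(x,s)$, which inherits the differential inequality~(1)). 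Second, fix a basepoint $x_0\in M$.

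The curvature hypothesis $|\Rm|\le\epsilon/t$ enters only through its control on the geometry. Integrating $\partial_t g=-2\Rc$ together with $|\Rc|\le(n-1)\epsilon/t$ yields the metric comparison $(s/t)^{2(n-1)\epsilon}g(t)\le g(s)\le(t/s)^{2(n-1)\epsilon}g(t)$ for $0<s\le t\le T$; in particular the evolving distance $r(x,t):=d_{g(t)}(x_0,x)$ changes by at most the factor $(t/s)^{(n-1)\epsilon}$ between times. Combining the Laplacian comparison theorem, applied with $\Rc\ge-(n-1)\tfrac{\epsilon}{t}g$, with the first-variation formula for arclength under the flow gives, in the barrier sense along minimizing geodesics,
\[
\Delta r\ \le\ \frac{n-1}{r}\ +\ c_n\sqrt{\tfrac{\epsilon}{t}},\qquad \bigl|\partial_t r\bigr|\ \le\ \frac{c_n\epsilon}{t}\,r .
\]
After smoothing $r(\cdot,t)$ to a function $\theta(\cdot,t)\ge1$ with $|\nabla\theta|\le1$ obeying the same two estimates, one takes $\phi$ to be an explicit expression in $\theta$ and $t$, tuned so that the displayed error terms are absorbed and $\phi$ is a supersolution, $(\partial_t-\Delta)\phi\ge\tfrac{a}{t}\phi$, while still $\phi(\cdot,t)\to\infty$ spatially. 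It is precisely in matching this weight against the curvature-driven errors --- the $\tfrac{c_n\epsilon}{t}\,r$ contributions --- so that the supersolution inequality survives against the $\tfrac{a}{t}$ factor on the right of~(1), that one is forced to take $\epsilon=\epsilon(n)$ small.

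With $\phi$ constructed, one runs the maximum principle on $f-\delta\phi$ over $M\times(0,T]$. The ingredients required are exactly those supplied by the hypotheses: $F$ is nonnegative and, thanks to~(2) and the smoothness of $f$ (which upgrades the pointwise infinite-order vanishing at $t=0$ to a \emph{locally uniform} one), vanishes to infinite order at $t=0$ uniformly on compact subsets of $M$, while~(3) bounds $F$ by $Ct^{-l}$ and so controls $f-\delta\phi$ near spatial infinity. The scalar maximum principle of Huang--Tam then gives $f\le\delta\phi$, and letting $\delta\to0$ yields $f\equiv0$. The two departures from \cite{huangtam} --- the running maximum $F$ in place of $f$ on the right of~(1), and the power-type growth bound $Ct^{-l}$ in~(3) --- are cosmetic: the first is handled exactly as in \cite{liuszek}, since the comparison already bounds $f$ by a supersolution (the barrier being taken nondecreasing in $t$), and the second enters only through the standard control of the solution at infinity.

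\textbf{Main obstacle.} The genuine difficulty, as opposed to bookkeeping, is the barrier construction. Being a supersolution of the $\tfrac{a}{t}$-inequality forces a definite positive power of $t$ into $\phi$, hence forces $\phi\to0$ to finite order as $t\to0$; yet the comparison at $t=0$ must still close, reconciling this finite-order decay of $\phi$ with the facts that $f$ vanishes to infinite order at $t=0$ only pointwise and obeys only the worst-case bound $f\le Ct^{-l}$ uniformly in $x$. Arranging all three of these requirements simultaneously is exactly what fixes the smallness of $\epsilon(n)$, and it is the one step where the estimates must be set up with real care rather than routine manipulation.
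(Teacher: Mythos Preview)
Your plan, as written, has a genuine gap --- precisely the one you flag under ``Main obstacle'' and then leave unresolved. A single global barrier $\phi$ satisfying $(\partial_t-\Delta)\phi\ge\tfrac{a}{t}\phi$ must, as you note, carry a positive power of $t$ and hence vanish as $t\to0$. On the other hand the only uniform a priori bound on $f$ is $f\le Ct^{-l}$, with no spatial decay whatsoever. Consequently there is no reason for $f-\delta\phi$ to attain its supremum on $M\times(0,T]$: along a sequence $(x_j,t_j)$ with $t_j\to0$ and $x_j\to\infty$ chosen so that $f(x_j,t_j)\sim t_j^{-l}$, the quantity $f-\delta\phi$ may well be unbounded above, and the comparison never closes. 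Declaring that this step ``must be set up with real care'' is not a resolution; in fact no single barrier of the type you describe can work, and your sketch is not what Huang--Tam actually do.

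The paper's proof (following \cite{huangtam}) avoids this by a two-step argument that your plan omits. The first step shows, for every $k>a$, that $f\le B_kt^k$ on all of $M$: one weights $F:=t^{-k}f$ by a \emph{compactly supported} spatial cutoff $\Psi=\Phi(\rho/r)$ --- so that a maximum is automatically attained --- and by a strictly decreasing time factor $\theta(t)=\exp(-\alpha t^{1-\beta})$. At a positive maximum of $H=\theta\Psi F$, the monotonicity of $\theta$ combined with the weight $t^{-k}$ forces $\max_{s\le t_0}f(x_0,s)=f(x_0,t_0)$, which converts the running-maximum inequality (1) into $(\partial_t-\Delta)F\le0$ at that point; the cutoff errors are then beaten by the $\theta'$-term for $\alpha$ large, yielding $\Psi F\le1$, hence $f\le e^\alpha t^k$ on $\{\rho\le r\}$, and sending $r\to\infty$ gives the claim. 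Only after this iteration --- once one knows, say, $t^{-a}f\le Ct^{2}$ uniformly --- does one run the second, much simpler comparison $t^{-a}f\le\delta(\eta+t)$ with $\eta=\rho\,\exp\bigl(\tfrac{2C_2}{1-b}t^{1-b}\bigr)$ to conclude $f\equiv0$. The localized iteration is the missing idea in your plan; without it the obstacle you correctly identify is fatal to the one-step barrier strategy you propose.
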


\begin{proof}
For the time-being, we will assume $\epsilon >0$ is fixed and that $g(t)$ is a smooth, complete solution to Ricci flow on $M \times [0,T]$ satisfying $|\Rm| \le \epsilon/t$. We will then specify $\epsilon$ over the course of the proof.

As in \cite{huangtam} we may assume $T \le 1$. We will first show that for any $k > a$, there exists a constant $B_k$ such that
\begin{equation*}
\sup_{x \in M} f(x,t) \le B_k t^k.
\end{equation*}

Let $\phi$ be a cutoff function as in \cite{huangtam}, i.e., choose $\phi \in C^\infty ([0,\infty))$ such that $0 \le \phi \le 1$ and
\begin{equation*}
\phi(s) = 
\begin{cases}
1 & 0 \le s \le 1,\\
0 & 2 \le s,\\
\end{cases}
\quad -C_0 \le \phi' \le 0, \quad |\phi''| \le C_0,
\end{equation*}
for some constant $C_0> 0$. Then let $\Phi = \phi^m$ for $m>2$ to be chosen later and define $q =1 - \frac{2}{m}$. Then 
\begin{equation*}
\begin{split}
0 \ge \Phi'  \ge - C(m) \Phi^q, \quad |\Phi''| \le C(m) \Phi^q.
\end{split}
\end{equation*}
where $C(m) > 0$ is a constant depending only on $m$ (and on $C_0$). 

Fix a point $y \in M$. As in Lemma 2.2 of \cite{huangtam}, there exists some $\rho \in C^\infty (M)$ such that
\begin{equation*}
d_{g(T)} (x,y) + 1 \le \rho (x) \le C' (d_{g(T)} (x,y) + 1 ), \quad |\nabla_{g(T)} \rho |_{g(T)} + |\nabla_{g(T)}^2 \rho |_{g(T)} \le C',
\end{equation*}
where $C'$ is a positive constant depending only on $n$ and $\frac{\epsilon}{T}$. This function then also satisfies
\begin{equation*}
|\nabla \rho | \le C_1 t^{-c\epsilon}, \quad |\Delta \rho | \le C_2 t^{-1/2 - c\epsilon},
\end{equation*}
where $C_1, C_2$ are constants depending only on $n, T$ and $\epsilon$, and $c>0$ depends only on the dimension $n$. We may assume $\epsilon$ is small enough so that $c \epsilon < 1/4$. Let $\Psi (x) = \Psi_r(x) = \Phi(\rho (x)/r)$ for $r\gg 1$. Define also $\theta = \exp (-\alpha t^{1-\beta})$, where $\alpha >0$ and $0 < \beta < 1$. By the estimates on the derivatives of $\rho$, we have
\begin{equation*}
|\nabla \Psi| = r^{-1} |\Phi' (\rho/r)| | \nabla \rho | \le r^{-1} C(m) C_1 \Phi^q (\rho/r) t^{-c\epsilon} \le C(m) \Psi^q t^{-1/4}
\end{equation*}
and
\begin{equation*}
\begin{split}
| \Delta \Psi | & = |r^{-2} \Phi'' (\rho/r) | \nabla \rho |^2 + r^{-1} \Phi'(\rho/r) \Delta \rho |\\
&  \le r^{-2} C(m) \Phi^q (\rho/r) t^{-2 c \epsilon} + r^{-1} C(m) \Phi^q(\rho) t^{-1/2 - c\epsilon}\\
& \le C(m) \Psi^q t^{-3/4}.\\
\end{split}
\end{equation*}

For $k > a$, let $F = t^{-k} f$. Then $F$ satisfies
\begin{equation*}
\begin{split}
(\partial_t - \Delta) F & = -k t^{-k-1} f + t^{-k} (\partial_t - \Delta) f \\
& \le -kt^{-k-1} f (x,t) + a t^{-k-1} \max_{0 \le s \le t} f(x,s)\\
\end{split}
\end{equation*}
and $F \le C t^{-l-k}$.

Let $H = \theta \Psi F$ and suppose that $H$ attains a positive maximum at the point $(x_0,t_0)$. Then, at this point, we have $\Psi > 0$ and both $(\partial_t - \Delta )H \ge 0$ and $\nabla H =0$. Since $\nabla H = 0$, we have
\begin{equation*}
 \nabla \Psi \cdot \nabla F = -\frac{F |\nabla \Psi |^2}{\Psi}.
\end{equation*}

Additionally, since $\Psi$ is independent of time, 
\begin{equation*}
 \theta(s) F(x_0,s) \le \theta (t_0) F(x_0,t_0)
\end{equation*}
for all $s \le t_0$. Because $\theta$ is decreasing, we have
\begin{equation*}
 s^{-k} f(x_0,s) = F(x_0,s) \le F(x_0,t_0) = t_0^{-k} f(x_0,t_0)
\end{equation*}
for $s \le t_0$. which in turn implies
\begin{equation*}
\max_{0 \le s \le t_0} f(x,s) = f(x,t_0).
\end{equation*}
Thus, at $(x_0,t_0)$ we have
\begin{equation*}
 (\partial_t - \Delta)F \le (-k + a)t_0^{-1} F \le 0.
\end{equation*}

Thus at $(x_0,t_0)$ we have
\begin{equation*}
\begin{split}
\Delta H & = \theta F \Delta \Psi + \theta \Psi \Delta F + 2 \theta \nabla F \cdot \nabla \Psi \\
& = \theta F \Delta \Psi + \theta \Psi \Delta F - 2\theta \frac{F |\nabla \Psi |^2}{\Psi} \\
& \ge- C(m) \theta F \Psi^q t_0^{-3/4} - C(m) \theta F \Psi^{2q-1} t_0^{-3/4} + \theta \Psi \Delta F
\end{split}
\end{equation*}
and
\begin{equation*}
\partial_t H =  -\alpha(1 - \beta)t_0^{-\beta} \theta \Psi F + \theta \Psi \partial_t F.
\end{equation*}
We can then compute
\begin{equation*}
\begin{split}
0 & \le (\partial_t - \Delta) H \\
& \le \theta \Psi (\partial_t - \Delta)F- \alpha (1-\beta) t_0^{-\beta} \theta \Psi F + C(m)\theta \Psi^q F t_0^{-3/4} + C(m) \theta \Psi^{2q-1}F t_0^{-3/4} \\
& \le -\alpha (1 - \beta) t_0^{-\beta}\theta \Psi F + C(m) \theta (\Psi F)^q  t_0^{-3/4 - (1-q)(l+k)}\\
& \quad + C(m) \theta (\Psi F)^{2q-1}t_0^{-3/4 - (2-2q)(l+k)} .\\
\end{split}
\end{equation*}

We now choose $m$ and $\beta$ so that the powers of $t_0$ in the denominators of the last two terms are less than $\beta$. We take $\beta$ to be $7/8$ (any $\beta \in (3/4,1)$ will do). Recalling that $q = 1-2/m$, we choose $m$ large enough so that $7/8 > 3/4 + (1-q)(l+k)$ and $7/8 > 3/4 + (2-2q)(l+k)$. Then
\begin{equation*}
\frac{\alpha}{8} \Psi F = \alpha (1- \beta) \Psi F \le C(m) \left( (\Psi F)^q + (\Psi F)^{2q-1} \right).
\end{equation*}
Finally, we choose $\alpha$ large enough so that $\alpha > 16 C(m)$. Then
\begin{equation*}
2\Psi F \le (\Psi F)^q + (\Psi F)^{2q-2},
\end{equation*}
implying that $(\Psi F) (x_0,t_0) \le 1$, and hence $H \le 1$ everywhere. In particular, for any $x \in \{ \rho \le r \}$, $f(x,t) = t^k F(x,t) \le e^\alpha t^k :=B_kt^k$. Sending $r$ to infinity then proves that $f(x,t) \le B_k t^k$. 

Next, again as in \cite{huangtam}, we define the function $\eta (x,t) = \rho(x) \exp \left( \frac{2 C_2}{1-b} t^{1-b} \right)$ for $b>1$. Since $|\Delta \rho| \le C_2 t^{-b}$, we have
\begin{equation*}
\left( \partial_t - \Delta \right) \eta > 0, \quad \partial_t \eta > 0.
\end{equation*}
Let $F = t^{-a} f$. Fix $\delta > 0$ and consider the function $F - \delta \eta - \delta t$. Note that by our previous argument, $F \le C t^2$, and in particular is bounded. For some $t_1 > 0$ depending on $\delta$ and $c$, $F - \delta t < 0$ for $t \le t_1$ and for $t \ge t_1$, $F-\delta \eta < 0$ outside some compact set. So, if $F- \delta \eta - \delta t$ is ever positive, there must exist some $(x_0,t_0) \in M \times (0,T]$ at which it attains a positive maximum. Because $-\delta \eta-\delta t$ is decreasing in time, for any $s < t_0$ from the inequality
\begin{equation*}
F(x_0,s)-\delta \eta(x_0,s) - \delta s \le F(x_0,t_0) - \delta \eta (x_0,t_0) - \delta t_0,
\end{equation*}
we conclude
\begin{equation*}
 F(x_0, s) \le F(x_0,t_0).
\end{equation*}
As in our previous argument, this implies that $f(x_0,t_0) = \max_{0\le s \le t_0} f(x_0,s)$, so that at $(x_0,t_0)$
\begin{equation*}
(\partial_t -\Delta) (F-\delta \eta - \delta t) < 0,
\end{equation*}
a contradiction. Thus, for any $\delta > 0$, $F - \delta \eta - \delta t \le 0$. Taking $\delta \to 0$ then implies that $F = 0$.
\end{proof}

We can now prove Theorem \ref{maxprinsystem}.
\begin{proof}[Proof of Theorem \ref{maxprinsystem}]
For $k > 0$ to be determined later, define the functions $F$ and $G$ on $M \times [0,T]$ by $F = t^{-k} |\mathbf{X}|^2$, $G = t^{-(k+1)} |\mathbf{Y}|^2$ for $t \in (0,T]$ and $F(x,0) = G(x,0) = 0$. From the assumption that $D_t^l\mathbf{X} = D_t^l\mathbf{Y} = 0$ for all $l\ge0$, it follows that both $F$ and $G$ are smooth on $M \times [0,T]$ and that $\partial_t^l F = \partial_t^l G =0$ for all $l \ge 0$.

We have
\begin{equation*}
\begin{split}
(\partial_t - \Delta) F & =-k t^{-(k+1)} |\mathbf{X}|^2 + 2 t^{-k} \langle (D_t - \Delta) \mathbf{X}, \mathbf{X} \rangle - 2 t^{-k} | \nabla \mathbf{X}|^2\\
& \le -k t^{-(k+1)} |\mathbf{X}|^2 + 2 t^{-k} | (D_t - \Delta) \mathbf{X}| |\mathbf{X}| \\
& \le t^{-(k+1)} (2C - k) |\mathbf{X}|^2 + 2 C t^{-(k+2)} |\mathbf{X}| |\mathbf{Y}| \\
& \le t^{-1} (3C - k) F + Ct^{-2} G\\
\end{split}
\end{equation*}
and
\begin{equation*}
\begin{split}
\partial_t G & = -(k+1) t^{-(k+2)} |\mathbf{Y}|^2 + 2 t^{-(k+1)} \langle D_t \mathbf{Y}, \mathbf{Y} \rangle \\
& \le (2C - k - 1) t^{-(k+2)} |\mathbf{Y}|^2 + 2 C t^{-(k+1)} |\mathbf{X}| |\mathbf{Y}|\\
 & \le C F + t^{-1} (3C- k - 1) G.\\
\end{split}
\end{equation*}
Choosing $k > 3C$, this becomes
\begin{equation*}
(\partial_t - \Delta) F \le t^{-2} C G, \qquad \partial_t G \le C F.
\end{equation*}
In particular this implies that
\begin{equation*}
G(x,t) \le C t \max_{0 \le s \le t} F(x,s),
\end{equation*}
and therefore
\begin{equation*}
(\partial_t - \Delta) F \le t^{-1} C^2 \max_{ 0 \le s \le t } F (x,s).
\end{equation*}
By our assumption on $\mathbf{X}$, $F \le C t^{-2l-k}$. Thus $F$ satisfies the hypotheses of Proposition \ref{maxprinscalar}, and must vanish identically. We then conclude that $G$, hence $\mathbf{Y}$, vanishes as well.
\end{proof}

\section{Proof of Theorem \ref{main}}

We are now almost ready to prove Theorem \ref{main}. We just need to first verify that $\mathbf{X}$ and $\mathbf{Y}$ satisfy the last major remaining hypothesis of Theorem \ref{maxprinsystem}, that is, that all time derivatives of $\mathbf{X}$ and $\mathbf{Y}$ vanish at $t=0$.

\subsection{Vanishing of time derivatives}
We begin by recording a standard commutator formula, which is in fact valid (with obvious modifications) for any family of smooth metrics.

\begin{prop}\label{inductioncomm}
Let $(M,g(t))$ be a smooth solution to the Ricci flow for $t \in [0,T]$.
 Then, for any $l \ge 1$, the formula
\begin{equation}\label{multicomm}
[D_t, \nabla^{(l)}] \mathcal{A} = \sum_{k=1}^l \nabla^{(k-1)} [D_t,\nabla] \nabla^{(l-k)} \mathcal{A}
\end{equation}
is valid for any smooth family of tensor fields $\mathcal{A}$ on $M \times [0,T]$.
\end{prop}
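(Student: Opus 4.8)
\section*{Proof proposal for Proposition \ref{inductioncomm}}

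The plan is a direct induction on $l$, following the standard derivation of the analogous commutator formula for iterated covariant derivatives. The base case $l = 1$ is the tautology $[D_t, \nabla]\mathcal{A} = \nabla^{(0)}[D_t, \nabla]\nabla^{(0)}\mathcal{A}$, where $\nabla^{(0)}$ denotes the identity. For the inductive step, assume (\ref{multicomm}) holds for some $l \ge 1$. Writing $\nabla^{(l+1)}\mathcal{A} = \nabla(\nabla^{(l)}\mathcal{A})$ and $\nabla^{(l+1)} D_t \mathcal{A} = \nabla(\nabla^{(l)} D_t \mathcal{A})$, and adding and subtracting $\nabla(D_t \nabla^{(l)}\mathcal{A})$, one obtains
\[
[D_t, \nabla^{(l+1)}]\mathcal{A} = [D_t, \nabla](\nabla^{(l)}\mathcal{A}) + \nabla\bigl( [D_t, \nabla^{(l)}]\mathcal{A} \bigr).
\]
This is simply the algebraic identity $[D_t, \nabla \circ \nabla^{(l)}] = [D_t, \nabla]\circ \nabla^{(l)} + \nabla \circ [D_t, \nabla^{(l)}]$ applied to $\mathcal{A}$; it requires nothing about $D_t$ beyond linearity, and in particular is valid for any smooth one-parameter family of connections, as the statement notes.

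Next, apply the inductive hypothesis to the last term to get
\[
[D_t, \nabla^{(l+1)}]\mathcal{A} = [D_t, \nabla]\nabla^{(l)}\mathcal{A} + \sum_{k=1}^{l} \nabla^{(k)} [D_t, \nabla] \nabla^{(l-k)}\mathcal{A}.
\]
Reindexing the sum by $j = k + 1$ rewrites it as $\sum_{j=2}^{l+1} \nabla^{(j-1)}[D_t, \nabla]\nabla^{(l+1-j)}\mathcal{A}$, while the leftover term $[D_t, \nabla]\nabla^{(l)}\mathcal{A} = \nabla^{(0)}[D_t, \nabla]\nabla^{(l+1-1)}\mathcal{A}$ is exactly the $j = 1$ summand of $\sum_{j=1}^{l+1}\nabla^{(j-1)}[D_t, \nabla]\nabla^{(l+1-j)}\mathcal{A}$. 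Combining the two recovers the full sum $\sum_{j=1}^{l+1} \nabla^{(j-1)}[D_t, \nabla]\nabla^{(l+1-j)}\mathcal{A}$, which is precisely (\ref{multicomm}) with $l$ replaced by $l+1$. This closes the induction.

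I do not expect any genuine obstacle; the content of the proposition is purely formal. The only point that warrants a moment of care is the combinatorial bookkeeping in the inductive step — recognizing that, after invoking the inductive hypothesis, the extra $[D_t, \nabla]\nabla^{(l)}\mathcal{A}$ term is precisely the missing $k = 1$ (equivalently $j = 1$) term of the target sum, so that the indices telescope correctly. Everything else follows immediately from the fact that $D_t$ and $\nabla$ are linear operators between the relevant tensor bundles.
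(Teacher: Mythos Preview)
Your proof is correct and follows essentially the same inductive argument as the paper's. The only cosmetic difference is that the paper peels off the innermost $\nabla$ (writing $\nabla^{(m+1)} = \nabla^{(m)}\circ\nabla$ and applying the hypothesis to $\nabla\mathcal{A}$, so the extra term is the $k=m+1$ summand), whereas you peel off the outermost $\nabla$ and recover the $k=1$ summand; both routes close the induction identically.
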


\begin{proof}
We proceed by induction on $l$. The base case, $l=1$, is trivial. Now, suppose that (\ref{multicomm}) holds for $l \le m$ for some $m \ge 1$. Then,
\begin{equation*}
\begin{split}
[D_t, \nabla^{(m+1)}] \mathcal{A} & = D_t \nabla^{(m+1)} \mathcal{A} - \nabla^{(m+1)} D_t \mathcal{A}\\
& = [D_t, \nabla^{(m)}] \nabla \mathcal{A} + \nabla^{m} D_t \nabla \mathcal{A} - \nabla^{(m+1)} D_t \mathcal{A} \\
& = [D_t, \nabla^{(m)}] \nabla \mathcal{A} + \nabla^{(m)}[D_t, \nabla] \mathcal{A} \\
& = \sum_{k=1}^m \nabla^{(k-1)} [D_t, \nabla] \nabla^{(m-k)} ( \nabla \mathcal{A} ) + \nabla^{(m)} [D_t, \nabla] \mathcal{A} \\
& = \sum_{k=1}^{m+1} \nabla^{(k-1)} [D_t, \nabla] \nabla^{(m+1-k)} \mathcal{A},\\
\end{split}
\end{equation*}
as desired.
\end{proof}

Now we argue inductively that $D_t^k \mathbf{X} = 0$ and $D_t^k \mathbf{Y}=0$ at $t=0$.

\begin{prop}\label{timederivativesvanish}
Let $M=\hat{M} \times \check{M}$ be a smooth manifold and $g(t)$ be a smooth, complete solution to the Ricci flow such that $g(0)$ splits as a product. Define $\mathcal{P}$ and $\mathcal{R}$ as in Section 2. The following equations hold at $t=0$ for all $k, l \ge 0$:

\begin{equation*}
D_t^k \nabla^{(l)} \mathcal{R} = 0,\quad D_t^k \nabla^{(l+1)} \mathcal{P} = 0.
\end{equation*}

\end{prop}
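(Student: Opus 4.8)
The plan is to prove the two families of identities simultaneously by a double induction, peeling off time derivatives one at a time and using the fact that at $t=0$ the product structure is exact. The base case $k=0$ is the statement that $\nabla^{(l)}\mathcal{R} = 0$ and $\nabla^{(l+1)}\mathcal{P} = 0$ at $t=0$. This holds because at $t=0$ we have $\hat P(0) = \hat P_0$, $\check P(0) = \check P_0$, which are parallel with respect to $g_0$ (the Levi-Civita connection of a Riemannian product preserves the splitting $TM = \hat H \oplus \check H$), so $\nabla \mathcal{P} = 0$ at $t=0$, hence all its covariant derivatives vanish there; and $\mathcal{R} = \Rm \circ \mathcal{P} = 0$ at $t=0$ because for a product metric the curvature operator annihilates the "mixed" two-forms that are the image of $\mathcal{P}_0$ (equivalently $\Rm_{g_0}$ has no components mixing the two factors), so all covariant derivatives of $\mathcal{R}$ vanish at $t=0$ as well.

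\medskip
For the inductive step I would fix $k \ge 1$ and assume the conclusion holds with $k$ replaced by any $k' < k$, for all $l$. I want to compute $D_t^k \nabla^{(l)} \mathcal{R}$ and $D_t^k \nabla^{(l+1)} \mathcal{P}$ at $t=0$ by commuting one $D_t$ inward. Write $D_t^k \nabla^{(l)} \mathcal{R} = D_t^{k-1}(D_t \nabla^{(l)} \mathcal{R})$, and use Proposition~\ref{inductioncomm} to write $D_t \nabla^{(l)}\mathcal{R} = \nabla^{(l)} D_t \mathcal{R} + \sum_{j=1}^{l} \nabla^{(j-1)} [D_t,\nabla] \nabla^{(l-j)} \mathcal{R}$. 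The commutator $[D_t,\nabla]$ is given by (\ref{dtcomm}) and is a zeroth-order operator in $t$-derivatives built out of $\nabla \Rm$ and $\Rm$, so every term on the right is a sum of contractions of covariant derivatives of $\Rm$ against covariant derivatives of $\mathcal{R}$ (or of $\mathcal{S},\mathcal{T}$, which by their definitions are $\nabla^{(m)}\Rm$ contracted with $\nabla^{(\le 2)}\mathcal{P}$ — and via $\nabla \mathcal{R} = \mathcal{S} + \Rm * \nabla\mathcal{P}$ these all reduce to the same building blocks). For the evolution of $\mathcal{R}$ itself one uses $(D_t-\Delta)\mathcal{R} = \mathcal{Q}(\Rm,\Rm)\circ\mathcal{P} + \Rm\circ\Delta\mathcal{P} + 2\nabla\Rm * \nabla\mathcal{P}$ from the proof of Proposition~\ref{schematicequationsr}, so $D_t \mathcal{R} = \Delta\mathcal{R} + (\text{terms each containing a factor } \nabla\mathcal{P} \text{ or a factor } \mathcal{R})$. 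Similarly for $\mathcal{P}$ one uses Proposition~\ref{schematicequationsp}: $D_t \nabla^{(l+1)}\mathcal{P}$ is a sum of terms, each carrying at least one factor that is a covariant derivative of $\mathcal{P}$ of order $\ge 1$ or a factor of $\mathcal{R},\mathcal{S},\mathcal{T}$. The upshot is a schematic recursion: $D_t^k \nabla^{(l)}\mathcal{R}$ and $D_t^k \nabla^{(l+1)}\mathcal{P}$ are each finite sums of terms of the form $\big(D_t^{a_1}\nabla^{(b_1)}\text{of curvature}\big) * \cdots * \big(D_t^{a_r}\nabla^{(b_r)}\mathcal{P}\big)$ or with one $D_t^{a}\nabla^{(b)}\mathcal{R}$ factor, where in every term either some factor is $D_t^{a}\nabla^{(b+1)}\mathcal{P}$ with $a < k$, or some factor is $D_t^{a}\nabla^{(b)}\mathcal{R}$ with $a<k$, or (from the $\Delta\mathcal{R}$ term) it is $D_t^{k-1}\nabla^{(l+2)}\mathcal{R}$ — all of which vanish at $t=0$ by the inductive hypothesis.

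\medskip
To make this rigorous I would formalize the bookkeeping with a strong induction on the pair $(k,l)$ ordered appropriately: note that the troublesome $\Delta$ term raises $l$ but lowers $k$, so I induct on $k$ first (outer) and, for fixed $k$, the $\Delta$ terms only refer to the already-established case $k-1$; within the step no further induction on $l$ is needed since every other term has a factor with strictly fewer $D_t$'s. One must also check that covariant derivatives and $D_t$-derivatives of $\Rm$ are themselves smooth up to $t=0$ and finite there — this is immediate since $g(t)$ is a smooth solution on the \emph{closed} interval $[0,T]$ and the Ricci flow equation expresses every $\partial_t$-derivative of $g$ in terms of spatial derivatives of $g$.

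\medskip
\textbf{The main obstacle} I anticipate is organizing the schematic recursion cleanly enough that one can verify \emph{every} term produced in expanding $D_t^k$ of these quantities genuinely contains a factor covered by the inductive hypothesis — in particular tracking the factors coming from $\mathcal{S}$ and $\mathcal{T}$ (which hide factors of $\nabla\mathcal{P}, \nabla^2\mathcal{P}$ inside $\nabla\mathcal{R}$, $\nabla\mathcal{S}$) and making sure the single $\Delta$-generated term that raises the derivative count on $\mathcal{R}$ is always handled by the outer induction on $k$ rather than creating a circular dependence. Once the class of "admissible" schematic terms is defined so that it is closed under $\nabla$, under $D_t$ (modulo dropping to smaller $k$), and under the commutators (\ref{dtcomm})–(\ref{heatcomm}), and every admissible term is seen to vanish at $t=0$ by hypothesis, the induction closes and the proposition follows.
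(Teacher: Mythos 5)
Your overall strategy coincides with the paper's: establish the $k=0$ case from the parallelism of $\hat P_0,\check P_0$ and the absence of mixed curvature at $t=0$, then run an outer induction on $k$ with $l$ universally quantified, peel off one $D_t$ via Proposition~\ref{inductioncomm} and (\ref{dtcomm}), and note that the only term that raises the number of spatial derivatives on $\mathcal{R}$ is the Laplacian coming from $D_t\mathcal{R}$, which only consumes one $D_t$ and so is covered by the hypothesis at a strictly lower level in $k$. That is exactly the paper's induction.

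The one place your sketch asserts more than it establishes is the line $D_t\mathcal{R}=\Delta\mathcal{R}+(\text{terms each carrying a factor of }\nabla\mathcal{P}\text{ or }\mathcal{R})$. The problematic piece is the reaction term $\mathcal{Q}(\Rm,\Rm)\circ\mathcal{P}$: Lemma~\ref{q} only provides a \emph{norm estimate} $|\mathcal{Q}(A,B)\circ\mathcal{P}|\le C(|A\circ\mathcal{P}||B|+|A||B\circ\mathcal{P}|)$, whereas the inductive argument needs a genuine \emph{algebraic} factorization so that $D_t^k$ of the product is forced to hit a vanishing factor. The paper obtains this from Lemma~4.9 of \cite{kholonomy}, which gives $\mathcal{Q}(\Rm,\Rm)\circ\mathcal{P}=\mathcal{R}*\mathcal{U}_1+\mathcal{R}^**\mathcal{U}_2$ with $\mathcal{R}^*=\mathcal{P}\circ\Rm$ the adjoint of $\mathcal{R}$ --- and consequently the inductive hypothesis must also carry $D_t^m\nabla^{(l)}\mathcal{R}^*=0$. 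That addition is essentially free (since $D_t g=0$ and $\nabla g=0$, taking adjoints commutes with $D_t$ and $\nabla$, so vanishing of $\mathcal{R}$'s derivatives yields vanishing of $\mathcal{R}^*$'s), but your proposal never mentions $\mathcal{R}^*$ and never justifies the factorization, and a term proportional to $\mathcal{R}^*$ alone is \emph{not} of the form you listed. To make the step watertight you should record the $\sharp$-product factorization (the nonvanishing pieces of $(\Rm\#\Rm)\circ\mathcal{P}$ all involve a contraction against $\mathcal{P}\circ\Rm$ or $\Rm\circ\mathcal{P}$, as already visible in the proof of Lemma~\ref{q}) and augment the inductive hypothesis accordingly.
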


\begin{proof}
We proceed by induction on $k$, beginning with the base case $k=0$. Because the metric splits as a product initially, at $t=0$ we have $\nabla^{(l)} \hat{P}\equiv\nabla^{(l)} \check{P}\equiv 0$ for all $l \ge 0$ and
\begin{equation*}
R(\hat{P} (\cdot), \check{P} (\cdot), \cdot , \cdot ) \equiv 0.
\end{equation*}
From this we get that, for any $X, Y, Z, W \in TM$,
\begin{equation*}
\mathcal{R} (X^* \wedge Y^*) (Z,W) = 2 R( \hat{P} X, \check{P}Y, W, Z) + 2 R(\check{P}X, \hat{P} Y, W, Z) = 0.
\end{equation*}

Combining these facts, we conclude
\begin{equation*}
\nabla^{(l+1)} \mathcal{P} \equiv 0, \quad \nabla^{(l)} \mathcal{R} \equiv 0, \quad \nabla^{(l)} \mathcal{R}^* \equiv 0,
\end{equation*}
at $t=0$, where $\mathcal{R}^* = \mathcal{P} \circ \Rm$ denotes the adjoint of $\mathcal{R}$ with respect to $g$.

Now starting the induction step, suppose that for some $k \ge 0$, for all $l \ge 0$ and any $m \le k$,
\begin{equation*}
D^m_t \nabla^{(l+1)} \mathcal{P} = 0, \quad D^m_t \nabla^{(l)} \mathcal{R} = 0,
\end{equation*}
hence also $D_t^m \nabla^{(l)} \mathcal{R}^* = 0$. Recall that
\begin{equation*}
(D_t - \Delta) \Rm = \mathcal{Q} (\Rm, \Rm) .
\end{equation*}
As in \cite{kholonomy}, Lemma 4.9, $\mathcal{Q}(\Rm, \Rm) \circ \mathcal{P} = \mathcal{R} * \mathcal{U}_1 + \mathcal{R}^* * \mathcal{U}_2$, where $\mathcal{U}_1$ and $\mathcal{U}_2$ are smooth families of tensors on $M$. Thus we can compute
\begin{equation*}
\begin{split}
D_t \mathcal{R} & = (D_t \Rm ) \circ \mathcal{P} + \Rm \circ (D_t \mathcal{P}) \\
& = (\Delta \Rm) \circ \mathcal{P} + \mathcal{Q} (\Rm, \Rm) \circ \mathcal{P},\\
\end{split}
\end{equation*}
and thus
\begin{equation}\label{dtr}
\begin{split}
D_t^{k+1} \mathcal{R} & = D_t^k \left( (\Delta \Rm)\circ \mathcal{P} \right) + D_t^k \left( \mathcal{Q} (\Rm, \Rm) \circ \mathcal{P} \right).\\
\end{split}
\end{equation}

Because
\begin{equation*}
\Delta \mathcal{R} = (\Delta \Rm) \circ \mathcal{P} + \Rm \circ \Delta \mathcal{P} + 2 \nabla_i \Rm \circ \nabla_i \mathcal{P},
\end{equation*}
by the induction hypothesis $D_t^k ((\Delta \Rm) \circ \mathcal{P}) \equiv 0$ at $t=0$. Similarly,
\begin{equation*}
D_t^k \big( \mathcal{Q} (\Rm, \Rm) \circ \mathcal{P} \big) =D_t^k (\mathcal{R} * \mathcal{U}_1) + D_t^k (\mathcal{R}^* * \mathcal{U}_2) = 0.
\end{equation*}
We conclude that $D_t^{k+1} \mathcal{R} \equiv 0$, and thus $D_t^{k+1} \mathcal{R}^* \equiv 0$.

Now, using the commutator from equation (\ref{dtcomm}) and Proposition \ref{inductioncomm}, for any $l > 0$ we have
\begin{equation*}
\begin{split}
D_t \nabla^{(l)} \mathcal{R} & = \sum_{m=1}^l \nabla^{(m-1)} [D_t, \nabla] \nabla^{(l-m)}\mathcal{R} + \nabla^{(l)} D_t \mathcal{R}\\
& = \sum_{m=1}^l \nabla^{(m-1)} \left( \nabla \Rm * \nabla^{(l-m)} \mathcal{R} + \Rm * \nabla^{(l-m+1)} \mathcal{R} \right) + \nabla^{(l)} D_t \mathcal{R},\\
\end{split}
\end{equation*}
and thus
\begin{equation*}
D_t^{k+1} \nabla^{(l)} \mathcal{R} = \sum_{m=1}^l D_t^{k}\nabla^{(m-1)} \left( \nabla \Rm * \nabla^{(l-m)} \mathcal{R} + \Rm * \nabla^{(l-m+1)} \mathcal{R} \right) + D_t^k \nabla^{(l)} D_t \mathcal{R}.
\end{equation*}
Expanding using the product rule and applying the induction hypothesis, all terms in the first sum vanish at $t=0$. For the remaining term, we again use the evolution equation for $\mathcal{R}$. We have
\begin{equation*}
\begin{split}
D_t^k \nabla^{(l)} D_t \mathcal{R} & = D_t^k \nabla^{(l)} \left( (\Delta \Rm) \circ \mathcal{P} + \mathcal{Q}(\Rm, \Rm) \circ \mathcal{P} \right).\\
\end{split}
\end{equation*}
As before, rewriting $\mathcal{Q}(\Rm, \Rm) \circ \mathcal{P}$ in terms of $\mathcal{R}$ and $ \mathcal{R}^*$ and expanding using the product rule, it follows that $D_t^k \nabla^{(l)} D_t \mathcal{R} \equiv 0$ at $t=0$.

We now move on to the derivatives of $\mathcal{P}$. Recall that
\begin{equation*}
D_t \nabla \mathcal{P} = [D_t, \nabla] \mathcal{P} = \Rm * \nabla \mathcal{P} + \mathcal{P} * \mathcal{S}.
\end{equation*}
Applying this in combination with Proposition \ref{inductioncomm}, we get, for any $l \ge 1$,
\begin{equation*}
\begin{split}
D_t^{k+1} \nabla^{(l)} \mathcal{P} & = \sum_{m=1}^l D_t^k \nabla^{(m-1)}  [D_t, \nabla] \nabla^{(l-m)} \mathcal{P} + D_t^k \nabla^{(l)} D_t \mathcal{P}\\
& = \sum_{m=1}^{l-1} D_t^k \nabla^{(m-1)} \big( \nabla \Rm * \nabla^{(l-m)} \mathcal{P} + \Rm * \nabla^{(l-m+1)} \mathcal{P} \big) \\
& \quad + D_t^k \nabla^{(l-1)} [D_t, \nabla] \mathcal{P} + D_t^k \nabla^{(l)} D_t \mathcal{P}.\\
\end{split}
\end{equation*}
As before, every term in the first sum vanishes by the induction hypothesis, while the final term vanishes because $D_t \mathcal{P} \equiv 0$. Finally we can see that
\begin{equation*}
D_t^k \nabla^{(l-1)} [D_t, \nabla] \mathcal{P} = D_t^k \nabla^{(l-1)} (\Rm * \nabla \mathcal{P} + \mathcal{P} * \mathcal{S} ),
\end{equation*}
and because $\mathcal{S} = (\nabla \Rm) \circ \mathcal{P} = \nabla \mathcal{R} + \Rm * \nabla \mathcal{P}$, $D_t^k \nabla^{(l-1)} [D_t, \nabla] \mathcal{P} \equiv 0$ at $t=0$.
This completes the proof.
\end{proof}

\subsection{Preservation of product structures}
In the proof of Theorem \ref{main}, we will use the operator $\mathcal{F}: \Lambda^2 T^*M \to \Lambda^2 T^*M$ defined by
\begin{equation*}
\mathcal{F} \omega (X,Y) = \omega (\hat{P} X, \check{P} Y) - \omega ( \check{P} X, \hat{P} Y).
\end{equation*}
(See, for example, Section 2.2 of \cite{kahlerity}.) Observe that
\begin{equation*}
\begin{split}
\mathcal{P} \circ \mathcal{F} \omega (X,Y) & = \mathcal{F} \omega (\hat{P} X, \check{P} Y) + \mathcal{F} \omega (\check{P} X, \hat{P} Y) \\
& = \omega ( \hat{P}^2 X, \check{P}^2 Y) - \omega( \check{P} \hat{P} X, \hat{P} \check{P} Y) +  \omega ( \hat{P} \check{P} X, \check{P} \hat{P} Y) - \omega (\check{P}^2 X, \hat{P}^2 Y)\\
& = \omega ( \hat{P} X, \check{P} Y) - \omega (\check{P} X, \hat{P} Y).\\
\end{split}
\end{equation*}
Therefore $\mathcal{P} \circ \mathcal{F} \equiv \mathcal{F}$.

\begin{proof}[Proof of Theorem 1]
We have shown in Propositions \ref{systembound} and \ref{timederivativesvanish} that the system $\mathbf{X}, \mathbf{Y}$ satisfies the first two hypotheses of Theorem \ref{maxprinsystem}. Additionally, the curvature bounds from Proposition \ref{curvatureestimates} imply that $|\mathbf{X}| \le C t^{-2}$. Thus, $\mathbf{X} \equiv 0$ and $\mathbf{Y} \equiv 0$ on $M \times [0,T]$. In particular, we know that $\mathcal{R} \equiv 0$ and $\nabla \mathcal{P} \equiv 0$ on $M \times [0,T]$.

We claim that $\nabla \hat{P} \equiv \nabla \check{P} \equiv 0$ and $\partial_t \hat{P} \equiv \partial_t \check{P} \equiv 0$. Similar to the proof of Lemma 7 in \cite{kahlerity}, if we define $W = \nabla \hat{P}$, then
\begin{equation*}
\begin{split}
D_t W_{ai}^j = [D_t, \nabla_a] \hat{P}^j_i & = \nabla_p R_{pai}^c \hat{P}^j_c - \nabla_p R_{pab}^j \hat{P}^b_i + R_{a}^c W_{ci}^j.
\end{split}
\end{equation*}
Note that the first two terms combine to give 
\begin{equation*}
\begin{split}
&\langle \nabla_{e_p} R (e_p, e_a) \check{P} e_i, \hat{P} e_j \rangle - \langle \nabla_{e_p} R (e_p, e_a) \hat{P} e_i, \check{P} e_j \rangle \\
& \quad = \frac{1}{2} \left( \nabla_{e_p} \Rm (\check{P} e_i^* \wedge \hat{P} e_j^*) ( e_p, e_a) - \nabla_{e_p} \Rm( \hat{P} e_i^* \wedge \check{P} e_j^*) (e_p, e_a) \right)\\
& \quad = -\frac{1}{2} \nabla_{e_p} \Rm \circ \mathcal{F} (e_i^* \wedge e_j^*) (e_p,e_a). \\
\end{split} 
\end{equation*}
But, since $\mathcal{P} \circ \mathcal{F} = \mathcal{F}$,
\begin{equation*}
\nabla \Rm \circ \mathcal{F} = \nabla \Rm \circ \mathcal{P} \circ \mathcal{F} = \mathcal{S} \circ \mathcal{F} = 0,
\end{equation*}
so $D_t W_{ai}^j = R^c_a W^j_{ci}$. Thus, for any point $x \in M$, the function $f(t) =| \nabla \hat{P}|^2 (x,t)$ satisfies
\begin{equation*}
f'(t) \le Cf
\end{equation*}
for some $C$ depending on $x$. Since $f(0) = 0$, $f$ is identically zero. Thus $\hat{P}$ (and similarly $\check{P}$) remain parallel.

Hence, we have
\begin{equation*}
R (\cdot, \cdot, \hat{P} ( \cdot), \check{P} (\cdot)) = 0,
\end{equation*}
which implies that $\Rc \circ \hat{P} = \hat{P} \circ \Rc$ and $\Rc \circ \check{P} = \check{P} \circ \Rc$, and thus, from (\ref{pdefs}), $\partial_t \hat{P} = \partial_t \check{P} = 0$ on $[0,T]$. Theorem \ref{main} follows.

\end{proof}

\end{document}